\newtheorem{lemma}{Lemma}
\newtheorem{theorem}{Theorem}
\newtheorem{example}{Example}[section]
\newtheorem{remark}{Remark}[section]
\begin{document}

\title{\bf  Multicut decomposition methods with cut selection for multistage stochastic programs}

\maketitle

\vspace*{0.5cm}

\begin{center}
\begin{tabular}{cc}
\begin{tabular}{c}
Vincent Guigues (corresponding author)\\
School of Applied Mathematics, FGV\\
Praia de Botafogo, Rio de Janeiro, Brazil\\ 
{\tt vguigues@fgv.br}\\
\end{tabular}
\begin{tabular}{c}
Michelle Bandarra\\
School of Applied Mathematics, FGV\\
Praia de Botafogo, Rio de Janeiro, Brazil\\ 
{\tt michelle.bandarra@mirow.com.br}\\
\end{tabular}
\end{tabular}
\end{center}

\vspace*{0.5cm}





\date{}

\begin{abstract}
We introduce a variant of Multicut Decomposition Algorithms (MuDA), called CuSMuDA (Cut Selection for Multicut Decomposition Algorithms), for solving multistage stochastic linear programs that incorporates
strategies to select the most relevant cuts of the approximate recourse functions. 
We prove the convergence of the method in a finite number
of iterations and use it to solve six portfolio problems with direct transaction costs under return uncertainty and
six inventory management problems under demand uncertainty. On all problem instances CuSMuDA is much quicker than MuDA: between
5.1 and 12.6 times quicker for the porfolio problems considered and between 6.4 and 15.7 times quicker for the inventory problems.\\
\end{abstract}

\par {\textbf{Keywords:}} Stochastic programming; Stochastic Dual Dynamic Programming; Multicut Decomposition Algorithm; Portfolio selection; Inventory management.\\

\par {\textbf{AMS subject classifications:}} 90C15, 91B30.

\section{Introduction}

Multistage stochastic optimization problems are common in many areas of engineering and in finance.
However, solving these problems is challenging and in general requires decomposition techniques.
Two popular decomposition methods are Approximate Dynamic Programming (ADP) (Powell 2011) 
and sampling-based variants of the Nested Decomposition (ND) algorithm (Birge 1985, Birge and Louveaux 1997)
and of the Multicut Nested Decomposition (MuND) algorithm (Birge and Louveaux 1988, Gassmann 1990).
The introduction of sampling within ND was proposed by (Pereira and Pinto 1991) and the corresponding
method is usually called Stochastic Dual Dynamic Programming (SDDP).
Several enhancements and extensions of SDDP have been proposed such as 
CUPPS (Chen and Powell 1999), ReSA (Hindsberger and Philpott 2001), AND (Birge and Donohue 2001), DOASA (Philpott and Guan 2008),
risk-averse variants in (Guigues and R\"omisch 2012, Guigues and R\"omisch 2012, Philpott and de Matos 2012, 
Guigues 2013, Shapiro et al. 2013, Shapiro et al. 2013, Kozmik and Morton 2015),
cut formulas for nonlinear problems in (Guigues 2016), regularizations
in (Asamov and Powell 2015) for linear problems
and SDDP-REG in (Guigues et al. 2017) for nonlinear problems.  
Convergence of these methods was proved in (Philpott and Guan 2008) for linear problems,
in (Girardeau et al. 2015) for nonlinear risk-neutral problems, and in (Guigues 2016) for risk-averse nonlinear problems. 
All these algorithms compute lower approximations of the cost-to-go functions expressed as a supremum of affine functions called
optimality cuts. Typically, at each iteration, a fixed (moderate to large) number of cuts is added for each cost-to-go function.
Therefore cut selection or cut pruning strategies may be helpful to speed up the convergence.
In stochastic optimization, the problem of cut selection for lower approximations of the cost-to-go functions
associated to each node of the scenario tree was discussed for the first time in (Ruszczy\'nski 1993) where only
the active cuts are selected.
Pruning strategies of basis (quadratic) functions have been proposed in (Gaubert et al. 2011) and 
(McEneaney et al. 2008) for max-plus based approximation methods which, similarly to SDDP, approximate the cost-to-go functions of a nonlinear
optimal control problem by a supremum of basis functions. More precisely, in 
(Gaubert et al. 2011), a fixed number of cuts is eliminated and cut selection is done solving a combinatorial optimization problem.
For SDDP, in (Shapiro et al. 2013a) it is suggested at some iterations to eliminate redundant cuts
(a cut is redundant if it is never active in describing the lower approximate cost-to-go function).
This procedure is called {\em{test of usefulness}} in (Pfeiffer et al. 2012).
This requires solving at each stage as many linear programs as there are cuts. 
In (Pfeiffer et al. 2012) and (Philpott et al. 2012), only the cuts that have the largest value for at least
one of the trial points computed are considered relevant. 
This 
strategy is called the {\em{Territory algorithm}} in (Pfeiffer et al. 2012)
and Level 1 cut selection in (Philpott et al. 2012). It was presented for the first time
in 2007 at the ROADEF congress by David Game and Guillaume Le Roy (GDF-Suez), see (Pfeiffer et al. 2012).

Sampling can also be incorporated into  MuND algorithm.
It was used for instance  recently in (Zhang et al. 2016) to solve a water allocation problem under uncertainty.
This algorithm builds many more cuts  than SDDP per iteration (more than 10 times more in typical implementations) and therefore each iteration takes
more time but much less iterations are needed to satisfy some stopping criterion.
Therefore cut selection strategies could also be useful. However, to the best of our knowledge,
the combination of Multicut decomposition methods 
with cut selection strategies, refereed to as CuSMuDA (Cut Selection for Multicut Decomposition Algorithms) in the sequel, has not been proposed so far. 
In this context, the objective of this paper is to study CuSMuDA and to propose cut selection strategies more efficient than the 
aforementioned ones. More precisely, instead of selecting all the cuts that are the highest at the trial points, we introduce a set of selectors
that select some subset of these cuts. The selectors have to satisfy an assumption (Assumption (H3), see Section \ref{sec:cutselectionalgo})
to ensure the convergence of CuSMuDA in a finite number of iterations. 
We obtain a family of cut selection strategies; a given strategy corresponding to a choice of selectors along the iterations.
In this family, the most economic (in terms of memory) cut selection strategy satisfying (H3),
called Multicut Limited Memory Level 1, selects at each trial point only one cut, namely the oldest cut. 
This strategy is the Multicut variant of the Limited Memory Level 1 cut selection proposed in (Guigues 2017).
The least economic strategy, i.e., the one that keeps the largest amount of cuts, is the Multicut
variant of Level 1. "Between" these two strategies, using the flexibility offered by the selectors (as long as Assumption (H3) is satisfied by these selectors), we obtain
a (large) family of cut selection strategies.
We also prove the convergence of CuSMuDA, that uses cut selection strategies from this family, in a finite number of iterations.
 This proof extends the theory in (Guigues 2017) in two aspects:
(i) first the stochastic case is considered, i.e., SDDP and multicut SDDP are considered whereas the deterministic case, i.e., DDP, was considered in (Guigues 2017) and
(ii) second more general cut selection strategies are dealt with. Item (ii) requires an additional technical discussion to show that
we do not cycle, see Lemma \ref{lemmafinitecuts}.

Numerical experiments on six instances of a portfolio problem
and six instances of an inventory problem
show that the corresponding variant of CuSMuDA is much quicker than sampling-based MuND combined with Multicut Level 1 (between 10.2  and 25.6 times quicker) and much quicker than MuND with sampling in the forward pass
(between 5.1 and 15.7 times quicker).

The outline of the study is as follows. The class of problems considered and assumptions are discussed in Section \ref{classassumptions}.
In Subection \ref{msda} we recall sampling-based MuND while in Subsection \ref{sec:cutselectionalgo} CuSMuDA is described.
Section \ref{sec:convanalysis} proves the convergence of CuSMuDA in a finite number of iterations.
Finally, numerical experiments are presented in Section \ref{sec:numsim}.

%
%
%
%
%
%
%
%
%

\section{Problem formulation and assumptions}\label{classassumptions}

We are interested in solution methods for linear stochastic dynamic programming equations:
the first stage problem is 
\begin{equation}\label{firststodp}
\mathcal{Q}_1( x_0 ) = \left\{
\begin{array}{l}
\inf_{x_1} c_1^T x_1 + \mathcal{Q}_2 ( x_1 )\\
A_{1} x_{1} + B_{1} x_{0} = b_1,
x_1 \geq 0
\end{array}
\right.
\end{equation}
for $x_0$ given and for $t=2,\ldots,T$, $\mathcal{Q}_t( x_{t-1} )= \mathbb{E}_{\xi_t}[ \mathfrak{Q}_t ( x_{t-1}, \xi_{t}  )  ]$ with
\begin{equation}\label{secondstodp}
\mathfrak{Q}_t ( x_{t-1}, \xi_{t}  ) = 
\left\{ 
\begin{array}{l}
\inf_{x_t \in \mathbb{R}^n} c_t^T x_t + \mathcal{Q}_{t+1} ( x_t )\\
A_{t} x_{t} + B_{t} x_{t-1} = b_t,
x_t \geq 0,
\end{array}
\right.
\end{equation}
with the convention that $\mathcal{Q}_{T+1}$ is null and
where for $t=2,\ldots,T$, random vector $\xi_t$ corresponds to the concatenation of the elements in random matrices $A_t, B_t$ which have a known
finite number of rows and random vectors $b_t, c_t$
(it is assumed that $\xi_1$ is not random). For convenience, we will denote 
$$
X_t(x_{t-1}, \xi_t):=\{x_t \in \mathbb{R}^n : A_{t} x_{t} + B_{t} x_{t-1} = b_t, \,x_t \geq 0 \}.
$$
We make the following assumptions:
\begin{itemize}
\item[(H1)] The random vectors $\xi_2, \ldots, \xi_T$ are independent and have discrete distributions with finite support. 
\item[(H2)] The set $X_1(x_{0}, \xi_1 )$ is nonempty and bounded and for every $x_1 \in X_1(x_{0}, \xi_1 )$,
for every $t=2,\ldots,T$, for every realization $\tilde \xi_2, \ldots, \tilde\xi_t$ of $\xi_2,\ldots,\xi_t$,
for every $x_{\tau} \in X_{\tau}( x_{\tau-1} , \tilde \xi_{\tau}), \tau=2,\ldots,t-1$, the set $X_t( x_{t-1} , {\tilde \xi}_t )$
is nonempty and bounded.
\end{itemize}
We will denote by $\Theta_t = \{\xi_{t 1},\ldots,\xi_{t M_t} \}$ the support  of $\xi_t$ for stage $t$ with
$p_{t i}= \mathbb{P}(\xi_t = \xi_{t i}) >0, i=1,\ldots,M_t$ and with vector $\xi_{t j}$ being the concatenation
of the elements in $A_{t j}, B_{t j}, b_{t j}, c_{t j}$.

\section{Algorithms}

\subsection{Multicut stochastic decomposition}\label{msda}

The multicut decomposition method approximates function  $\mathfrak{Q}_t(\cdot, \xi_{t j})$ at iteration $k$ for $t=2,\ldots,T$, $j=1,\ldots,M_t$, 
 by a piecewise affine lower bounding function $\mathfrak{Q}_t^k(\cdot, \xi_{t j})$ 
which is a maximum of affine functions $\mathcal{C}_{t j}^i$ called cuts:
$$
\mathfrak{Q}_t^k(  x_{t-1}, \xi_{t j}) = \max_{1 \leq i \leq k} \mathcal{C}_{t j}^i( x_{t-1}  ) \mbox{ with }\mathcal{C}_{t j}^i (x_{t-1})=\theta_{t j}^i + \langle \beta_{t j}^i , x_{t-1} \rangle
$$
where coefficients $\theta_{t j}^i, \beta_{t j}^i$ are computed as explained below
and where the usual scalar product in $\mathbb{R}^n$ is denoted by $\langle x, y\rangle = x^T y$ for $x, y \in \mathbb{R}^n$. These approximations
provide the lower bounding functions
\begin{equation}\label{approxQt}
\mathcal{Q}_{t}^k ( x_{t-1} )=  \sum_{j=1}^{M_t} p_{t j} \mathfrak{Q}_{t}^k (x_{t-1} , \xi_{t j})
\end{equation}
of $\mathcal{Q}_{t}$. Since $\mathcal{Q}_{T+1}$ is the null function, we will also define
$\mathcal{Q}_{T+1}^k \equiv 0$. The steps in MuDA are described below.\\

\par {\textbf{Step 1: Initialization.}} For $t=2,\ldots,T$, $j=1,\ldots,M_t$, take $\mathfrak{Q}_t^0(\cdot, \xi_{t j})\equiv -\infty$ as initial approximations 
(if known lower bounding affine functions for  $\mathfrak{Q}_t(\cdot, \xi_{t j})$ are available, they can be taken for $\mathfrak{Q}_t^0(\cdot, \xi_{t j})$
modifying the algorithm correspondingly).
Set the iteration count $k$ to 1 and $\mathcal{Q}_{T+1}^0 \equiv 0$.\\
\par {\textbf{Step 2: Forward pass.}} We generate a sample
${\tilde \xi}^k = (\tilde \xi_1^k, \tilde \xi_2^k,\ldots,\tilde \xi_T^k)$ from the distribution of $(\xi_1,\xi_2,\ldots,\xi_T)$,
with the convention that $\tilde \xi_1^k = \xi_1$ (here and in what follows, the tilde symbol will be used to represent realizations of random variables:
for random variable $\xi$, $\tilde \xi$ is a realization of $\xi$). Using approximation $\mathfrak{Q}_t^{k-1}(\cdot, \xi_{t j})$
of $\mathfrak{Q}_t(\cdot, \xi_{t j})$  (computed at previous iterations), we solve the problem
\begin{equation}\label{pbforwardpass}
\left\{
\begin{array}{l}
\inf_{x_t \in \mathbb{R}^n} x_t^T {\tilde c}_t^k + \mathcal{Q}_{t+1}^{k-1} ( x_t )\\
x_t \in X_t(x_{t-1}^k, {\tilde \xi}_{t}^k )
\end{array}
\right.
\end{equation}
for $t=1,\ldots,T$,
where $x_0^k=x_0$ and  $\mathcal{Q}_{t+1}^{k-1}$ is given by \eqref{approxQt} with $k$ replaced by $k-1$. Let $x_t^k$ be an optimal solution of the problem.\\
\par {\textbf{Step 3: Backward pass.}} 
The backward pass builds cuts for $\mathfrak{Q}_t(\cdot, \xi_{t j})$ at $x_{t-1}^k$ computed in the forward pass.
For $k \geq 1$ and $t=1,\ldots,T$, we introduce
the function ${\underline{\mathfrak{Q}}}_t^k : \mathbb{R}^n {\small{\times}} \Theta_t \rightarrow \mathbb{R}$ given by
\begin{equation}\label{backwardt0}
{\underline{\mathfrak{Q}}}_t^k (x_{t-1} , \xi_t   ) =  
\left\{
\begin{array}{l}
\inf_{x_t \in \mathbb{R}^n} c_t^T x_t + \mathcal{Q}_{t+1}^k ( x_t )\\
x_t \in X_t(x_{t-1}, \xi_{t} ),
\end{array}
\right.
\end{equation}
with the convention that $\Theta_1 = \{\xi_1\}$, and
we set $\mathcal{Q}_{T+1}^k \equiv 0$. For $j=1,\ldots,M_T$, we solve the problem
\begin{equation}\label{backwardT}
\mathfrak{Q}_T ( x_{T-1}^k, \xi_{T j}  ) = 
\left\{ 
\begin{array}{l}
\displaystyle \inf_{x_T \in \mathbb{R}^n} c_{T j}^T x_T \\
A_{T j} x_{T} + B_{T j} x_{T-1}^k = b_{T j},
x_T \geq 0,
\end{array}
\right.
\mbox{ with dual }
\left\{ 
\begin{array}{l}
\sup_{\lambda} \lambda^T ( b_{T j} - B_{T j} x_{T-1}^k )\\
A_{T j}^T \lambda \leq c_{T j}.
\end{array}
\right.
\end{equation}
Let $\lambda_{T j}^k$ be an optimal solution of the dual problem above. We get
$$
\mathfrak{Q}_T ( x_{T-1}, \xi_{T j}  ) \geq \langle \lambda_{T j}^{k},  b_{T j} - B_{T j} x_{T-1} \rangle
$$
and compute $\theta_{T j}^k=b_{T j}^T \lambda_{T j}^{k}$ and  $\beta_{T j}^k = - B_{T j}^T \lambda_{T j}^k$.
Then for $t=T-1$ down to $t=2$, knowing $\mathcal{Q}_{t+1}^k \leq \mathcal{Q}_{t+1}$,
we solve  the problem below for $j=1,\ldots,M_t$,
\begin{equation}\label{backwardt}
{\underline{\mathfrak{Q}}}_t^k ( x_{t-1}^k, \xi_{t j}  ) = 
\left\{ 
\begin{array}{l}
\displaystyle \inf_{x_t} c_{t j}^T x_t + \mathcal{Q}_{t+1}^k ( x_t ) \\
x_t \in X_t( x_{t-1}^k , \xi_{t j} )
\end{array}
\right.
=
\left\{ 
\begin{array}{l}
\displaystyle \inf_{x_t, f} c_{t j}^T x_t + \sum_{\ell=1}^{M_{t+1}} p_{t+1 \ell}  f_{\ell} \\
A_{t j} x_{t} + B_{t j} x_{t-1}^k = b_{t j}, x_t \geq 0,\\
f_{\ell} \geq \theta_{t+1 \ell}^i + \langle \beta_{t+1 \ell}^i , x_t  \rangle, i=1,\ldots,k, \ell=1,\ldots,M_{t+1}.
\end{array}
\right.
\end{equation}
Observe that due to (H2) the above problem is feasible and has a finite optimal value. Therefore ${\underline{\mathfrak{Q}}}_t^k ( x_{t-1}^k, \xi_{t j}  )$
can be expressed as the optimal value of the corresponding dual problem
\begin{equation}\label{dualpbtback}
{\underline{\mathfrak{Q}}}_t^k ( x_{t-1}^k, \xi_{t j}  ) = 
\left\{
\begin{array}{l}
\displaystyle \sup_{\lambda, \mu} \lambda^T( b_{t j} - B_{t j} x_{t-1}^k   ) + \sum_{i=1}^k \sum_{\ell=1}^{M_{t+1}} \mu_{i \ell} \theta_{t+1 \ell}^i  \\
A_{t j}^T \lambda + \sum_{i=1}^k \sum_{\ell=1}^{M_{t+1}} \mu_{i \ell} \beta_{t+1 \ell}^i \leq c_{t j},\\
p_{t+1 \ell} = \sum_{i=1}^k \mu_{i \ell},\,\ell=1,\ldots,M_{t+1},\\
\mu_{i \ell} \geq 0,\,i=1,\ldots,k,\ell=1,\ldots,M_{t+1}.
\end{array}
\right.
\end{equation}
Let $(\lambda_{t j}^k, \mu_{t j}^k )$ an optimal solution of dual problem \eqref{dualpbtback}.
Using the fact that $\mathcal{Q}_{t+1}^k \leq \mathcal{Q}_{t+1}$, we get
$$
\mathfrak{Q}_t ( x_{t-1} , \xi_{t j}  ) \geq  {\underline{\mathfrak{Q}}}_t^k ( x_{t-1} , \xi_{t j}  ) \geq 
\langle \lambda_{t j}^k ,  b_{t j} - B_{t j} x_{t-1} \rangle + \langle \mu_{t j}^k , \theta_{t+1}^k \rangle
$$
and we compute 
$$
\theta_{t j}^k =\langle  \lambda_{t j}^k ,  b_{t j} \rangle +  \langle \mu_{t j}^k , \theta_{t+1}^k \rangle \mbox{ and }
\beta_{t j}^k =- B_{t j}^T \lambda_{t j}^k.
$$
In these expressions, vector $\theta_{t+1}^k$ has components $\theta_{t+1 \ell}^i, \ell=1,\ldots,M_{t+1}, i=1,\ldots,k$,
arranged in the same order as components $\mu_{t j}^k(\ell, i)$ of $\mu_{t j}^k$.\\

\par {\textbf{Step 4:} Do $k \leftarrow k+1$ and go to Step 2.

\subsection{Multicut stochastic decomposition with cut selection}\label{sec:cutselectionalgo}

We now describe a variant of MuDA that stores all cut coefficients $\theta_{t j}^i, \beta_{t j}^i$, 
and trial points $x_{t-1}^i$,
but that uses a reduced set of cuts $\mathcal{C}_{t j}^i$ to approximate functions $\mathfrak{Q}_{t}(\cdot,\xi_{t j})$
when solving problem  
\eqref{pbforwardpass} in the forward pass and
\eqref{backwardt} in the backward pass.
Let $\mathcal{S}_{t j}^k$ be the set of indices of the cuts selected at the end of iteration $k$ to approximate
$\mathfrak{Q}_{t}(\cdot,\xi_{t j})$.  In the end of the backward pass of iteration $k$, the variant of MuDA with cut selection 
computes  
approximations $\mathcal{Q}_t^k$ of $\mathcal{Q}_t$ given by 
\eqref{approxQt} now with $\mathfrak{Q}_t^k( \cdot, \xi_{t j})$ given by 
\begin{equation} \label{qtk}
\displaystyle \mathfrak{Q}_{t}^k( x_{t-1}, \xi_{t j} ) =  \max_{\ell \in \mathcal{S}_{t j}^k} \, \mathcal{C}_{t j}^\ell( x_{t-1}),
\end{equation}
where the set $\mathcal{S}_{t j}^k$ is a subset of the indices of the cuts that
have the largest value for at least one of the trial points computed so far. 
More precisely, sets $\mathcal{S}_{t j}^k$ are initialized taking $\mathcal{S}_{t j}^0=\{0\}$ and $\mathcal{S}_{t j}^1=\{1\}$.  
For $t \in \{2,\ldots,T\}$, sets $\mathcal{S}_{t j}^k$ are computed as follows.
For $i=1,\ldots,k$, $t=2,\ldots,T$, $j=1,\ldots,M_t$, let
$I_{t j}^{i k}$ be the set of cuts for $\mathfrak{Q}_{t}(\cdot,\xi_{t j})$ computed at iteration $k$ or before that have the largest value
at $x_{t-1}^i$:
\begin{equation} \label{indexlevel1}
I_{t j}^{i k} = \operatorname*{arg\,max}_{\ell=1,\ldots,k} \mathcal{C}_{t j}^{\ell}( x_{t-1}^i ),
\end{equation}
where the cut indices in $I_{t j}^{i k}$ are sorted in ascending order.
With a slight abuse of notation, we will denote the $\ell$-th smallest element in $I_{t j}^{i k}$ by $I_{t j}^{i k}(\ell)$.
For instance, if $I_{t j}^{i k}=\{2,30,50\}$ then $I_{t j}^{i k}(1)=2, I_{t j}^{i k}(2)=30$, and $I_{t j}^{i k}(3)=50$.
A cut selection strategy is given by a set of selectors $\mathcal{S}_{t j}(m), m=1,2,\ldots,$ where 
$\mathcal{S}_{t j}(m)$ is a subset of $\{1,2,\ldots,m\}$ giving the indices of the cuts to select in
$I_{t j}^{i k}$, through the relation
$$
\mathcal{S}_{t j}^k = \bigcup_{i=1}^k \left\{I_{t j}^{i k}(\ell) : \ell \in \mathcal{S}_{t j}(| I_{t j}^{i k} |) \right\},
$$
where $| I_{t j}^{i k} |$ is the cardinality of set $I_{t j}^{i k}$.
We require the selectors to satisfy the following assumption:
\begin{itemize}
\item[(H3)] for $t=2,\ldots,T$, $j=1,\ldots,M_t$, for every $m \geq 1$, $\mathcal{S}_{t j}(m) \subseteq \mathcal{S}_{t j}(m+1)$.
\end{itemize}
\begin{example}[Multicut Level 1 and Territory Algorithm] \label{excs1} The strategy $\mathcal{S}_{t j}(m)=\{1,2,\ldots,m\}$ selects all cuts that have the highest value for at least
one trial point. In the context of SDDP, this strategy was called Level 1 in (Philpott et al. 2012)  and
Territory Algorithm in (Pfeiffer et al. 2012). For this strategy, we have $\mathcal{S}_{T j}^k=\{1,\ldots,k\}$ for all $j$ and $k \geq 1$,
meaning that no cut selection is needed for the last stage $T$. This comes from the fact that for all  $k \geq 2$ and $1 \leq k_1 \leq k$,
cut $\mathcal{C}_{T j}^{k_1}$ is selected because it is one of the cuts with the highest value at $x_{T-1}^{k_1}$. Indeed,
for any $1 \leq k_2 \leq k$ with $k_2 \neq k_1$, 
since $\lambda_{T j}^{k_2}$ is feasible for problem \eqref{backwardT} with $x_{T-1}^{k}$ replaced by $x_{T-1}^{k_1}$, we
get
$$
\mathcal{C}_{T j}^{k_1}( x_{T-1}^{k_1} ) = \mathfrak{Q}_T( x_{T-1}^{k_1} , \xi_{T j} ) 
\geq \langle \lambda_{T j}^{k_2} , b_{T j} - B_{T j} x_{T-1}^{k_1} \rangle = \mathcal{C}_{T j}^{k_2}( x_{T-1}^{k_1} ).
$$
\end{example}
\begin{example}[Multicut Limited Memory Level 1] The strategy that eliminates the largest amount of cuts, 
called Multicut Limited Memory Level 1 (MLM Level 1 for short), consists in taking
a singleton for every set $\mathcal{S}_{t j}(m)$. For (H3) to be satisfied, this implies
$\mathcal{S}_{t j}(m)=\{1\}$. This choice corresponds to the Limited Memory Level 1 cut selection introduced in (Guigues 2017)
in the context of DDP. For that particular choice, at a given point, among the cuts that have the highest value
only the oldest (i.e., the cut that was first computed among the cuts that have the highest value at that point) is selected.
\end{example}
The computation of $\mathcal{S}_{t j}^k$, i.e., of the cut indices to select at iteration $k$, is performed in the backward pass
(immediately after computing cut $\mathcal{C}_{t j}^k$)
using the pseudo-code given in the left and right  panels of Figure \ref{figurecut1} for
 the Multicut Level 1 and MLM Level 1 cut selection strategies respectively.

In this pseudo-code, we use 
the notation $I_{t j}^i$ in place of $I_{t j}^{i k}$. We also store in variable
$m_{t j}^i$ the current value of the highest cut for $\mathfrak{Q}_{t}(\cdot, \xi_{t j})$
at $x_{t-1}^{i}$. At the end of the first iteration, we initialize
$m_{t j}^1 = \mathcal{C}_{t j}^1( x_{t-1}^{1})$. After cut $\mathcal{C}_{t j}^k$ is computed at 
iteration $k \geq 2$, these variables are updated using the relations
$$   
\left\{
\begin{array}{lll}
m_{t j}^i &  \leftarrow & \max (m_{t j}^i , \mathcal{C}_{t j}^k ( x_{t-1}^{i}) ),\;i=1,\ldots,k-1,\\
m_{t j}^k &  \leftarrow & \max (\mathcal{C}_{t j}^{\ell }( x_{t-1}^{k}), \ell=1,\ldots,k ).
\end{array}
\right. 
$$
We also use an array  of Boolean called {\tt{Selected}} using the information given by variables $I_{t j}^i$
whose $\ell$-th entry is {\tt{True}} if cut $\ell$ is selected for $\mathfrak{Q}_{t}(\cdot, \xi_{t j})$ and {\tt{False}} otherwise. 
This allows us  to avoid copies of cut indices that may appear in $I_{t {j}}^{i_1 k}$ and $I_{t {j}}^{i_2 k}$ with $i_1 \neq i_2$.
\begin{figure}
\begin{tabular}{|c|c|}
 \hline 
Multicut Level 1 & MLM Level 1 \\
\hline
\begin{tabular}{l}
$I_{t j}^{k}=\{k\}$, $m_{t  j}^k =\mathcal{C}_{t j}^k ( x_{t-1}^k )$.\\
{\textbf{For}} $\ell=1,\ldots,k-1$,\\
\hspace*{0.3cm}{\textbf{If }}$\mathcal{C}_{t j}^k( x_{t-1}^{\ell} ) > m_{t j}^{\ell}$ \\
\hspace*{0.6cm}$I_{t j}^{\ell}=\{k\},\; m_{t j}^{\ell}=\mathcal{C}_{t j}^k( x_{t-1}^{\ell} )$\\
\hspace*{0.3cm}{\textbf{Else if}} $\mathcal{C}_{t j}^k( x_{t-1}^{\ell} ) = m_{t j}^{\ell}$ \\
\hspace*{0.6cm}$I_{t j}^{\ell}=I_{t j}^{\ell} \cup \{k\}$\\
\hspace*{0.3cm}{\textbf{End If}}\\
\hspace*{0.3cm}{\textbf{If }}$\mathcal{C}_{t j}^{\ell}( x_{t-1}^{k} ) > m_{t j}^k$ \\
\hspace*{0.6cm}$I_{t j}^k =\{\ell\},\; m_{t j}^k = \mathcal{C}_{t j}^{\ell}( x_{t-1}^{k} )$\\
\hspace*{0.3cm}{\textbf{Else if}} $\mathcal{C}_{t j}^{\ell}( x_{t-1}^{k} ) = m_{t j}^{k}$ \\
\hspace*{0.6cm}$I_{t j}^{k}=I_{t j}^{k} \cup \{\ell\}$\\
\hspace*{0.3cm}{\textbf{End If}}\\
{\textbf{End For}}\\
{\textbf{For}} $\ell=1,\ldots,k$,\\
\hspace*{0.3cm}{\tt{Selected}}[$\ell$]={\tt{False}}\\
{\textbf{End For}}\\
{\textbf{For}} $\ell=1,\ldots,k$\\
\hspace*{0.3cm}{\textbf{For}} $m=1,\ldots,|I_{t j}^{\ell}|$\\
\hspace*{0.6cm}{\tt{Selected}}[$I_{t j}^{\ell}[m]$]={\tt{True}}\\
\hspace*{0.3cm}{\textbf{End For}}\\
{\textbf{End For}}\\
$\mathcal{S}_{t j}^k=\emptyset$\\
{\textbf{For}} $\ell=1,\ldots,k$\\
\hspace*{0.3cm}{\textbf{If}} {\tt{Selected}}[$\ell$]={\tt{True}} \\
\hspace*{0.6cm}$\mathcal{S}_{t j}^k=\mathcal{S}_{t j}^k \cup \{\ell\}$\\
\hspace*{0.3cm}{\textbf{End If}}\\
{\textbf{End For}}\\
\end{tabular}
&
\begin{tabular}{l}
\vspace*{-0.45cm}\\
$I_{t j}^{k}=\{1\}$, $m_{t  j}^k =\mathcal{C}_{t j}^1 ( x_{t-1}^k )$.\\
{\textbf{For}} $\ell=1,\ldots,k-1$,\\
\hspace*{0.3cm}{\textbf{If }}$\mathcal{C}_{t j}^k( x_{t-1}^{\ell} ) > m_{t j}^{\ell}$ \\
\hspace*{0.6cm}$I_{t j}^{\ell}=\{k\},\; m_{t j}^{\ell}=\mathcal{C}_{t j}^k( x_{t-1}^{\ell} )$\\
\hspace*{0.3cm}{\textbf{End If}}\\
\vspace*{0.4cm}\\
\hspace*{0.3cm}{\textbf{If }}$\mathcal{C}_{t j}^{\ell + 1}( x_{t-1}^{k} ) > m_{t j}^k$ \\
\hspace*{0.6cm}$I_{t j}^k =\{\ell+1\},\; m_{t j}^k = \mathcal{C}_{t j}^{\ell + 1}( x_{t-1}^{k} )$\\
\hspace*{0.3cm}{\textbf{End If}}\\
\vspace*{0.4cm}\\
{\textbf{End For}}\\
{\textbf{For}} $\ell=1,\ldots,k$,\\
\hspace*{0.3cm}{\tt{Selected}}[$\ell$]={\tt{False}}\\
{\textbf{End For}}\\
{\textbf{For}} $\ell=1,\ldots,k$\\
\hspace*{0.3cm}{\textbf{For}} $m=1,\ldots,|I_{t j}^{\ell}|$\\
\hspace*{0.6cm}{\tt{Selected}}[$I_{t j}^{\ell}[m]$]={\tt{True}}\\
\hspace*{0.3cm}{\textbf{End For}}\\
{\textbf{End For}}\\
$\mathcal{S}_{t j}^k=\emptyset$\\
{\textbf{For}} $\ell=1,\ldots,k$\\
\hspace*{0.3cm}{\textbf{If}} {\tt{Selected}}[$\ell$]={\tt{True}} \\
\hspace*{0.6cm}$\mathcal{S}_{t j}^k=\mathcal{S}_{t j}^k \cup \{\ell\}$\\
\hspace*{0.3cm}{\textbf{End If}}\\
{\textbf{End For}}\\
\end{tabular}
\\
\hline
\end{tabular}

\caption{Pseudo-codes for the computation of set $\mathcal{S}_{t j}^k$ for fixed $t\in \{2,\ldots,T\}, k \geq 2, j=1,\ldots,M_t$, and two cut selection strategies.}
\label{figurecut1}
\end{figure}

\section{Convergence analysis}\label{sec:convanalysis}

Theorem \ref{convproof} proves that CuSMuDA converges in a finite number of iterations.
We will make the following assumption:
\begin{itemize}
\item[(H4)] The samples in the forward passes are independent: $(\tilde \xi_2^k, \ldots, \tilde \xi_T^k)$ is a realization of
$\xi^k=(\xi_2^k, \ldots, \xi_T^k) \sim (\xi_2, \ldots, \xi_T)$ 
and $\xi^1, \xi^2,\ldots,$ are independent.
\end{itemize}
The convergence proof is based on the following lemma:
\begin{lemma} \label{lemmafinitecuts}
Assume that all subproblems in the forward and backward passes
of CuSMuDA
are solved using an algorithm
that necessarily outputs an extreme point of the feasible set (for instance the simplex algorithm).
Let assumptions (H1), (H2), (H3), and (H4) hold.
Then almost surely, there exists $k_0 \geq 1$ such that 
for every $k \geq k_0$, $t=2,\ldots,T$, $j=1,\ldots,M_t$, we have
\begin{equation} \label{ineqk0}
\mathfrak{Q}_{t}^k (\cdot, \xi_{t j})=\mathfrak{Q}_{t}^{k_0}(\cdot, \xi_{t j})
\mbox{ and }\mathcal{Q}_{t}^k =\mathcal{Q}_{t}^{k_0}.
\end{equation}
\end{lemma}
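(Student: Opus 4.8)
The plan is to deduce stabilization from two finiteness facts — finitely many distinct cuts and finitely many distinct trial points — and then to run a monotonicity argument on the selected cuts that rules out cycling. I would organize the proof in three stages, the last of which is the real difficulty.

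For the first stage I would show by backward induction on $t$ that, for each $j$, the family of distinct affine functions $\{(\theta_{tj}^i,\beta_{tj}^i):i\ge 1\}$ is finite. At $t=T$ the dual in \eqref{backwardT} is maximized over the polyhedron $\{\lambda:A_{Tj}^{T}\lambda\le c_{Tj}\}$, which does not depend on $k$; since the solver returns an extreme point, $\lambda_{Tj}^k$ lies in the finite vertex set, so $(\theta_{Tj}^k,\beta_{Tj}^k)=(b_{Tj}^{T}\lambda_{Tj}^k,-B_{Tj}^{T}\lambda_{Tj}^k)$ ranges over a finite set. For the inductive step, if stages $t+1,\dots,T$ produce finitely many distinct cuts, then each $\mathfrak{Q}_{t+1}^k(\cdot,\xi_{t+1\,\ell})$ in \eqref{qtk} is a maximum over one of finitely many subsets of a fixed finite family of affine functions, so by \eqref{approxQt} the function $\mathcal{Q}_{t+1}^k$ takes only finitely many functional forms. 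Hence the linear program \eqref{backwardt} has finitely many data configurations, its value function $x_{t-1}\mapsto{\underline{\mathfrak{Q}}}_t^k(x_{t-1},\xi_{tj})$ is convex piecewise affine with linear pieces drawn from a finite set, and the cut extracted at $x_{t-1}^k$ (built from the vertex dual solution of \eqref{dualpbtback}) coincides with one of these finitely many affine pieces. This closes the induction. The second stage is the symmetric forward induction: $x_0^k=x_0$ is fixed, and given that $x_{t-1}^k$ ranges over a finite set while $\mathcal{Q}_{t+1}^{k-1}$ has finitely many forms, problem \eqref{pbforwardpass} has finitely many configurations; since $\tilde\xi_t^k\in\Theta_t$ (by (H1)) and the solver returns a vertex, $x_t^k$ takes finitely many values. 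These arguments hold on every realization, so the conclusion will hold almost surely (in fact surely).

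For the final stage I would fix $(t,j)$ and, for each trial point $x_{t-1}^i$, track $m_{tj}^i=\max_{\ell\le k}\mathcal{C}_{tj}^\ell(x_{t-1}^i)$. This is non-decreasing in $k$, and by the first two stages the numbers $\mathcal{C}_{tj}^\ell(\bar x)$ over distinct cuts and distinct trial points $\bar x$ form a finite set; hence the finitely many maxima $\max_\ell\mathcal{C}_{tj}^\ell(\bar x)$ each stabilize after finitely many iterations. Let $k_1$ exceed all these stabilization times (over the finitely many $(t,j)$ and distinct $\bar x$) as well as the iteration at which the last distinct cut and the last distinct trial point first appear. For $k\ge k_1$ no newly computed cut strictly exceeds the maximum at any already-visited trial point, so for each fixed $i$ the sorted set $I_{tj}^{ik}$ can only acquire new larger indices appended as its greatest element, its earlier positions being unchanged. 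By (H3) the positions selected through $\mathcal{S}_{tj}(|I_{tj}^{ik}|)$ then only grow, so the indices selected at $x_{t-1}^i$ are non-decreasing in $k$; taking the union over $i$, $\mathcal{S}_{tj}^k$ is non-decreasing for inclusion once $k\ge k_1$. Although $\mathcal{S}_{tj}^k$ keeps absorbing indices of duplicate cuts, the associated set of distinct cut \emph{functions} is monotone and confined to the finite family from the first stage, hence stabilizes; therefore $\mathfrak{Q}_t^k(\cdot,\xi_{tj})=\max_{\ell\in\mathcal{S}_{tj}^k}\mathcal{C}_{tj}^\ell$ and $\mathcal{Q}_t^k$ stabilize, which is \eqref{ineqk0}.

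The main obstacle is exactly this non-cycling step. A priori a freshly computed cut may strictly dominate older ones at a previously visited trial point, ejecting a selected cut from the corresponding argmax set, so the family $\{\mathcal{S}_{tj}^k\}$ need not be monotone from the outset and could oscillate. The resolution is that each such strict improvement raises some $m_{tj}^i$, an event that can occur only finitely often because the $m_{tj}^i$ are monotone and take finitely many values; only past $k_1$, once these improvements have ceased, does (H3) deliver genuine monotonicity of the selection. This is precisely the extra ingredient needed to pass from the single-selector Level 1 analysis of (Guigues 2017) to the general family of selectors treated here, matching item (ii) flagged in the introduction.
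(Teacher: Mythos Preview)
Your argument is correct and reaches the same conclusion as the paper, but you organize the induction differently. The paper first proves finiteness of the trial points (its step~1$^0$), then runs a single backward induction whose hypothesis $\mathcal H(t)$ bundles together \emph{both} the finiteness of distinct stage-$t$ cuts \emph{and} the stabilization of the selected sets $\mathcal S_{tj}^k$; crucially, its bound on stage-$t$ cuts uses the stabilization at stage $t{+}1$ to reduce every dual polyhedron $\mathcal P_{tjk}$ with $k>k_{t+1}$ to the single polyhedron $\mathcal P_{tjk_{t+1}}$ via a column-replacement argument. You instead decouple the three ingredients: a pure backward induction for cut finiteness (using only finiteness at $t{+}1$, through the observation that the selected stage-$(t{+}1)$ cuts form one of finitely many subsets of a fixed finite family), then a forward induction for trial-point finiteness that exploits the just-proved finiteness of functional forms of $\mathcal Q_{t+1}^{k-1}$, and finally a clean monotonicity argument for stabilization. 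Your route avoids intertwining stabilization with finiteness, at the price of enumerating all $2^N$ subsets rather than a single stabilized one; the paper's route is tighter but more entangled. Your stabilization step is also packaged more uniformly than the paper's case analysis in~2$^0$.b: once the maxima $m_{tj}^i$ have frozen, (H3) forces the selected index sets $\mathcal I_{tj}^{ik}$, hence $\mathcal S_{tj}^k$, to be monotone, and the associated set of distinct cut \emph{functions} then stabilizes by finiteness.

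One sentence in your Stage~1 deserves sharpening. The assertion that the cut built from a vertex dual solution ``coincides with one of these finitely many affine pieces'' of the value function $x_{t-1}\mapsto\underline{\mathfrak Q}_t^k(x_{t-1},\xi_{tj})$ is not literally true: if the selected stage-$(t{+}1)$ cuts include a redundant one, an optimal extreme dual point can yield a supporting affine function that is not one of the maximal pieces of the value function. The conclusion you want still holds, but the right justification is the one the paper gives in~2$^0$.c: duplicate cut coefficients produce duplicate columns in the dual constraint matrix, so every vertex of the dual for a \emph{multiset} of stage-$(t{+}1)$ cuts gives the same $(\theta_{tj}^k,\beta_{tj}^k)$ as some vertex of the dual for the underlying \emph{set}; since there are only finitely many such sets of distinct cuts and each corresponding dual polyhedron has finitely many vertices, the family of possible stage-$t$ cut coefficients is finite. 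With this adjustment your decoupled scheme goes through cleanly.
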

\begin{proof} 
Let $\Omega_1$ be the event on the sample space $\Omega$ of sequences of forward scenarios such that 
every scenario is sampled an infinite number of times.
By Assumption (H4), this event $\Omega_1$ has probability one.

Consider a realization $\omega \in \Omega$ of CuSMuDA in $\Omega_1$ corresponding to realizations $(\tilde \xi^k_{1:T})_k$ of  $(\xi^k_{1:T})_k$ in the forward pass. To simplify, we will drop $\omega$ in the notation. For instance,
we will simply write $x_t^k, \mathcal{Q}_t^k$  for realizations  $x_t^k( \omega)$ and $\mathcal{Q}_t^k(\cdot)(\omega)$
of $x_t^k, \mathcal{Q}_t^k$ given realization $\omega \in \Omega$ of CuSMuDA.

\if{
\paragraph{{\textbf{1$^0$}}} We show by induction that the set of possible trial points $\{x_{t}^k : k \geq 1 \}$ is finite for every $t=1,\ldots,T-1$.
\paragraph{{\textbf{1$^0.$a.}}} $x_1^k$ is an extreme point of the polyhedron $X_1(x_0, \xi_1)$ which
has a finite number of extreme points since $A_1$ has a finite number of rows and columns.
Therefore $x_1^k$ can only take a finite number of values.
\paragraph{{\textbf{1$^0.$b.}}} Assume now that $x_{t-1}^k$ can only take a finite number of values for some $t\in \{2,\ldots,T\}$.
Recall that $x_t^k$ is an extreme point of one of the sets $X_t(x_{t-1}^k, \xi_{t j})$ where $j \in \{1,\ldots,M_t\}$.
Since each matrix $A_{t j}$ has a finite number of
rows and columns, each set  $X_t(x_{t-1}^k, \xi_{t j})$ has a finite number of extreme points.
Using the induction hypothesis and $M_t<+\infty$, there is a finite number of these sets and therefore
$x_t^k$ can only take a finite number of  values. This shows {\textbf{1$^0$.}}
}\fi
\par  We show by induction on $t$ that the number of different cuts computed by the algorithm is finite and that
after some iteration $k_t$ the same cuts are selected for functions $\mathfrak{Q}_{t}(\cdot, \xi_{t j})$.
Our induction hypothesis $\mathcal{H}(t)$ for $t\in \{2,\ldots,T\}$ is
that the sets $\{(\theta_{t j}^k , \beta_{t j}^k) : k \in \mathbb{N}\}, j=1,\ldots,M_t$, are finite and there exists some
finite $k_t$ such that for every $k>k_t$ we have
\begin{equation}\label{inductionT}
\{(\theta_{t j}^{\ell} , \beta_{t j}^{\ell}) : \ell \in S_{t j}^k \}=
\{(\theta_{t j}^{\ell} , \beta_{t j}^{\ell}) : \ell \in S_{t j}^{k_t} \}
\mbox{ and }x_{t-1}^k = x_{t-1}^{k_t},
\end{equation}
for every $j=1,\ldots,M_t$.
We will denote by $\mathcal{I}_{t j}^{i k}$ the set $\left\{I_{t j}^{i k}(\ell) : \ell \in \mathcal{S}_{t j}(| I_{t j}^{i k} |) \right\}$.
We first show, in items {{\textbf{a}}} and {{\textbf{b}}}  below that $\mathcal{H}(T)$ holds.
\par {{\textbf{a.}}} Observe that $\lambda_{T j}^k$ defined in the backward pass of CuSMuDA is an extreme point of the polyhedron
$\{\lambda : A_{T j}^\top \lambda \leq c_{T j}\}$. 
This polyhedron is a finite intersection of closed half spaces in finite dimension (since
$A_{T j}$ has a finite number of rows and columns) and therefore has a finite number of extreme points. It follows that $\lambda_{T j}^k$ can only take a finite
number of values, same as $(\theta_{T j}^k, \beta_{T j}^k)=(\langle \lambda_{T j}^k, b_{T j} \rangle , -B_{T j}^\top \lambda_{T j}^k)$, and
there exists ${\bar k}_{T}$ such that for every $k>{\bar k}_T$ and every $j$, each cut $\mathcal{C}_{T j}^k$ is a copy of a cut $\mathcal{C}_{T j}^{k'}$ with $1 \leq k' \leq {\bar k}_T$ (no new
cut is computed for functions $\mathfrak{Q}_{T}(\cdot, \xi_{T j})$ for $k>{\bar k}_T$).

Now recall that $x_{T-1}^k$ computed in the forward pass is a solution of
\eqref{pbforwardpass} with $t=T-1$ and this optimization problem
can be written as a linear problem 
adding variables $f_1,f_2,\ldots,f_{M_T}$
replacing in the objective $\mathcal{Q}_{T}^{k-1}(x_{T-1})$
by $\sum_{\ell=1}^{M_T} p_{T \ell} f_{\ell}$ and adding the linear
constraints 
$f_{\ell} \geq \theta_{T \ell}^i + \langle \beta_{T \ell}^i , x_{T-1} \rangle$,
$i=1,\ldots,k-1,$ $\ell=1,\ldots,M_T$.
On top of that, for iterations $k>{\bar k}_T$, since functions 
$\mathfrak{Q}_T^k(\cdot,\xi_{T j})$ are made of a collection 
of cuts taken from
the finite and fixed
set of cuts $\mathcal{C}_{T j}^{\ell}, \ell \leq \bar k_T$, the set of 
possible functions 
$(\mathfrak{Q}_T^k(\cdot,\xi_{T j}))_{k \geq 1}$ and therefore of possible functions 
$(\mathcal{Q}_T^k)_{k \geq 1}$ is finite.
It follows that there is a finite set of possible polyhedrons for the feasible
set of \eqref{pbforwardpass} (with $t=T-1$) rewritten as a linear program as we have just
explained, adding variables $f_1,f_2,\ldots,f_{M_T}$.
Since these polyhedrons have a finite number of extreme points (recall that
there is a finite number of different linear constraints), there is only
a finite number of possible trial points $(x_{T-1}^k)_{k \geq 1}$.
Therefore we can assume without loss of generality that $\bar k_T$ is such that 
for iterations $k>\bar k_T$, trial point $x_{T-1}^k$ is also a copy of a trial point
$x_{T-1}^{k'}$ with $k' \leq \bar k_T$.

\par {\textbf{b}}. We show that for every $i \geq 1$ and $j=1,\ldots,M_T$,
there exists $1 \leq i' \leq {\bar k}_T$ 
and  $k_{T j}(i') \geq  {\bar k}_T$
such that for every $k \geq  \max(i, k_{T j}(i'))$ we have
\begin{equation}\label{inductioncruc}
\left\{ (\theta_{T j}^{\ell}, \beta_{T j}^{\ell}) :  \ell \in \mathcal{I}_{T j}^{i k} \right\}=
\left\{ (\theta_{T j}^{\ell}, \beta_{T j}^{\ell}) :  \ell \in \mathcal{I}_{T j}^{i' k_{T j}(i')} \right\},
\end{equation}
which will show $\mathcal{H}(T)$ with  $k_T = \max_{1 \leq  j \leq M_T, 1 \leq i' \leq {\bar k}_T} k_{T j}( i')$.
Let us show that \eqref{inductioncruc} indeed holds.
Let us take $i \geq 1$ and $j \in \{1,\ldots,M_T \}$.
If $1 \leq i \leq {\bar k}_{T}$ define $i'=i$. Otherwise
due to {\textbf{a.}} we can find $1 \leq i' \leq {\bar k}_T$ such that 
$x_{T-1}^i = x_{T-1}^{i'}$ which implies $I_{T j}^{i k}= I_{T j}^{i' k}$ for every $k \geq i$.
Now consider the sequence of sets $(I_{T j}^{i' k})_{k \geq  {\bar k}_T}$.
Due to the definition of $\bar k_T$, the sequence  $(|I_{T j}^{i' k}|)_{k > {\bar k}_T}$ is nondecreasing and therefore
two cases can happen:
\begin{itemize}
\item[(A)] there exists $k_{T j}(i')$ such that for $k \geq k_{T j}(i')$ we have $I_{T j}^{i' k}=I_{T j}^{i' k_{T j}(i')}$ (the cuts
computed after iteration $k_{T j}(i')$ are not active at $x_{T-1}^{i'}$). In this case 
$\mathcal{I}_{T j}^{i' k} = \mathcal{I}_{T j}^{i' k_{T j}(i')}$
for $k \geq k_{T j}(i')$, $\mathcal{I}_{T j}^{i k} = \mathcal{I}_{T j}^{i' k} = \mathcal{I}_{T j}^{i' k_{T j}(i')}$
for $k \geq \max( k_{T j}(i'), i)$ and 
\eqref{inductioncruc} holds.
\item[(B)] The sequence $(|I_{T j}^{i' k}|)_{k \geq  {\bar k}_T}$ is unbounded.
Due to Assumption (H3), the sequence $(|\mathcal{S}_{T j}(m)|)_m$ is nondecreasing.
If there exists $k_{T j}>\bar k_T$ such that  $\mathcal{S}_{T j}(k)=\mathcal{S}_{T j}(k_{T j})$ for $k \geq k_{T j}$
then if $k_{T j}( i' )$ is the smallest $k$ such that 
$|I_{T j}^{i' k}| \geq k_{T j}$ then for every $k \geq k_{T j}( i' )$ we have 
$\mathcal{I}_{T j}^{i' k} = \mathcal{I}_{T j}^{i' k_{T j}(i')}$ and
for $k \geq \max( k_{T j}(i'), i)$ we deduce 
$\mathcal{I}_{T j}^{i k} = \mathcal{I}_{T j}^{i' k_{T j}(i')}$
and \eqref{inductioncruc} holds.
Otherwise the sequence $(|\mathcal{S}_{T j}(m)|)_m$ is unbounded and an infinite number of cut indices
are selected from the sets $(I_{T j}^{i' k})_{k \geq i'}$ to make up sets $(\mathcal{I}_{T j}^{i' k})_{k \geq i'}$. 
However, since there is a finite number of different cuts, there is only a finite number of iterations where a new
cut can be selected from $I_{T j}^{i' k}$ and therefore there exists $k_{T j}(i')$ such that \eqref{inductioncruc} holds
for $k \geq \max( k_{T j}(i'), i)$.
\end{itemize}
\paragraph{{\textbf{c.}}}
Now assume that $\mathcal{H}(t+1)$ holds for some $t \in \{2,\ldots,T-1\}$. We want to show $\mathcal{H}(t)$.
Consider the set $\mathcal{D}_{t j k}$ of points of form
\begin{equation}\label{relationthetabeta}
(\langle  \lambda ,  b_{t j} \rangle +  \sum_{\ell=1}^{M_{t+1}} \sum_{i \in S_{t+1 \ell}^k} \mu_{i \ell} \theta_{t+1 \ell}^i , - B_{t j}^\top \lambda )
\end{equation}
 where $(\lambda, \mu )$ is an extreme point of 
the set $\mathcal{P}_{t j k}$ of points $(\lambda, \mu)$
satisfying 
\begin{equation}\label{extpointpoly}
\begin{array}{l}
\mu \geq 0, p_{t+1 \ell}=\sum_{i \in S_{t+1 \ell}^k} \mu_{i \ell},\;\ell=1,\ldots,M_{t+1}, \;A_{t j}^\top \lambda + \sum_{\ell =1}^{M_{t+1}} \sum_{i \in S_{t+1 \ell}^k} \mu_{i \ell} \beta_{t+1 \ell}^i \leq c_{t j}.
\end{array}
\end{equation}
We claim that for every $k>k_{t+1}$, every point from $\mathcal{D}_{t j k}$
can be written as a point from $\mathcal{D}_{t j k_{t+1}}$, i.e., a point
of form \eqref{relationthetabeta}  with $k$ replaced by $k_{t+1}$ and $(\lambda, \mu )$ an extreme point of 
the set $\mathcal{P}_{t j k_{t+1}}$. Indeed, take a point from $\mathcal{D}_{t j k}$, i.e., a point
of form \eqref{relationthetabeta}
with $(\lambda, \mu)$ an extreme point of $\mathcal{P}_{t j k}$ and $k>k_{t+1}$. 
It can be written as a point from $\mathcal{D}_{t j k_{t+1}}$, i.e., a point of form 
\eqref{relationthetabeta} with $k$ replaced by $k_{t+1}$ and $(\lambda, \hat \mu)$ in the place
of $(\lambda, \mu)$
where $(\lambda, \hat \mu)$ is an extreme point of $\mathcal{P}_{t j k_{t+1}}$  obtained 
replacing the basic columns $\beta_{t+1 \ell}^i$ with $i \in S_{t+1 \ell}^k, i \notin S_{t+1 \ell}^{k_{t+1}}$
associated with $\mu$
by columns $\beta_{t+1 \ell}^{i'}$ with $i' \in S_{t+1 \ell}^{k_{t+1}}$
such that $(\theta_{t+1 \ell}^{i}, \beta_{t+1 \ell}^{i})=(\theta_{t+1 \ell}^{i'}, \beta_{t+1 \ell}^{i'})$ (this is possible due to $\mathcal{H}(t+1)$).

Since $\mathcal{P}_{t j k_{t+1}}$ has a finite number of extreme points,
the set $\mathcal{D}_{t j k}$ has a finite cardinality and recalling that
for CuSMuDA $(\theta_{t j}^k, \beta_{t j}^k) \in \mathcal{D}_{t j k}$, the cut coefficients $(\theta_{t j}^k, \beta_{t j}^k)$
can only take a finite number of values. This shows the first part of $\mathcal{H}(t)$. 
Therefore, there exists ${\bar k}_{t}$ such that for every $k>{\bar k}_t$ and every $j$, each cut $\mathcal{C}_{t j}^k$ is a copy of a cut $\mathcal{C}_{t j}^{k'}$ with $1 \leq k' \leq {\bar k}_t$ (no new
cut is computed for functions $\mathfrak{Q}_{t}(\cdot, \xi_{t j})$ for $k>{\bar k}_t$).
As for the induction step $t=T$, this clearly implies that
after some iteration, no new trial points are computed and therefore we can assume without loss
of generality that $\bar k_t$ is such that 
for $k>\bar k_T$ trial point $x_{t-1}^k$ is a copy of some $x_{t-1}^{\ell}$ with $1 \leq \ell \leq {\bar k}_t$.

Finally, we can show \eqref{inductionT}
proceeding as in {\textbf{b.}}, replacing $T$ by $t$. This achieves the proof of $\mathcal{H}(t)$.

Gathering our observations, we have shown that \eqref{ineqk0} holds with $k_0 = \max_{t=2,\ldots,T} k_t$.$\hfill \qed$
\end{proof}
\begin{remark}
For Multicut Level 1 and MLM Level 1 cut selection strategies corresponding to selectors
$\mathcal{S}_{t j}$  satisfying respectively $\mathcal{S}_{t j}(m)=\{1,\ldots,m\}$
and $\mathcal{S}_{t j}(m)=\{1\}$, integers $k_0, {\tilde k}_t$ defined in
Lemma \ref{lemmafinitecuts} and its proof satisfy $k_0 \leq \max_{t=2,\ldots,T} {\tilde k}_t$.
For other selectors $\mathcal{S}_{t j}$ this relation is not necessarily satisfied (see {\textbf{b.}}-(B) of the proof
of the lemma).
\end{remark}

\begin{theorem}\label{convproof} Let Assumptions (H1), (H2), (H3), and (H4) hold.
Then Algorithm CuSMuDA converges with probability one in a finite number of iterations to a policy which is an optimal solution
of \eqref{firststodp}-\eqref{secondstodp}. 
\end{theorem}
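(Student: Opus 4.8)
The plan is to use Lemma \ref{lemmafinitecuts} to reduce the theorem to an exactness statement at the trial points. By the lemma, on a full-measure event there is a finite $k_0$ after which all approximations are frozen: $\mathfrak{Q}_t^k(\cdot,\xi_{tj})=\mathfrak{Q}_t^{k_0}(\cdot,\xi_{tj})$ and $\mathcal{Q}_t^k=\mathcal{Q}_t^{k_0}$ for $k\geq k_0$, and the set of possible trial points is finite so that each $x_{t-1}^k$ with $k>k_0$ is a copy of a trial point stored by iteration $k_0$. By (H4) I may also restrict to the event on which every realization of $(\xi_2,\ldots,\xi_T)$ is sampled infinitely often, so that after $k_0$ each of the finitely many scenarios still recurs. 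Two elementary facts will be used repeatedly: (i) the approximations are lower bounds, $\mathfrak{Q}_t^{k_0}(\cdot,\xi_{tj})\leq \mathfrak{Q}_t(\cdot,\xi_{tj})$ and $\mathcal{Q}_t^{k_0}\leq \mathcal{Q}_t$, which is built into the cut construction of Step 3; and (ii) because the selection rule retains, at each stored trial point, at least one cut achieving the maximum there (the selected indices $\{I_{tj}^{ik}(\ell):\ell\in\mathcal{S}_{tj}(|I_{tj}^{ik}|)\}$ are drawn from the maximizing set $I_{tj}^{ik}$ and are nonempty for the strategies considered), the selected approximation $\mathfrak{Q}_t^{k_0}(x_{t-1}^i,\xi_{tj})$ equals the maximum over \emph{all} computed cuts at every stored trial point.

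The core is a backward induction on $t$ (from $T+1$ down to $1$) of the statement $(\mathcal{B}_t)$: for every forward trial point $x_{t-1}^k$ with $k>k_0$ one has $\mathcal{Q}_t^{k_0}(x_{t-1}^k)=\mathcal{Q}_t(x_{t-1}^k)$, with trivial base case $\mathcal{Q}_{T+1}^{k_0}\equiv\mathcal{Q}_{T+1}\equiv 0$. To pass from $(\mathcal{B}_{t+1})$ to $(\mathcal{B}_t)$ I fix a trial point $x_{t-1}^k$ and show termwise that $\mathfrak{Q}_t^{k_0}(x_{t-1}^k,\xi_{tj})=\mathfrak{Q}_t(x_{t-1}^k,\xi_{tj})$ for every $j$; weighting by $p_{tj}$ and using \eqref{approxQt} then yields $(\mathcal{B}_t)$. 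In the backward pass a cut $\mathcal{C}_{tj}^k$ is built at $x_{t-1}^k$ for each $j$, with $\mathcal{C}_{tj}^k(x_{t-1}^k)=\underline{\mathfrak{Q}}_t^k(x_{t-1}^k,\xi_{tj})$; letting $x_t^\ast(j)$ be an extreme-point optimal solution of that backward problem, the induction hypothesis applied at $x_t^\ast(j)$ gives $\mathcal{Q}_{t+1}^{k_0}(x_t^\ast(j))=\mathcal{Q}_{t+1}(x_t^\ast(j))$, whence $\underline{\mathfrak{Q}}_t^k(x_{t-1}^k,\xi_{tj})=c_{tj}^\top x_t^\ast(j)+\mathcal{Q}_{t+1}(x_t^\ast(j))\geq \mathfrak{Q}_t(x_{t-1}^k,\xi_{tj})$ by feasibility of $x_t^\ast(j)$ for the true problem. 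Combining this with $\underline{\mathfrak{Q}}_t^k\leq\mathfrak{Q}_t$ and with facts (i)--(ii) and stabilization, namely $\mathfrak{Q}_t^{k_0}(x_{t-1}^k,\xi_{tj})\geq \mathcal{C}_{tj}^k(x_{t-1}^k)=\underline{\mathfrak{Q}}_t^k(x_{t-1}^k,\xi_{tj})$ and $\mathfrak{Q}_t^{k_0}\leq\mathfrak{Q}_t$, forces all these quantities to coincide.

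The step requiring care, and which I expect to be the main obstacle, is the assertion that the backward minimizer $x_t^\ast(j)$ is itself a forward trial point of some iteration $>k_0$, so that $(\mathcal{B}_{t+1})$ is applicable to it; this is what upgrades exactness along the single sampled scenario to exactness for \emph{all} realizations $j$ at each trial point. Here (H4) enters: the scenario consisting of the prefix that produced $x_{t-1}^k$, followed by $\xi_{tj}$ at stage $t$ and any continuation, recurs at some iteration $k'>k_0$. Since the approximations are frozen and the same prefix drives the forward solves through identical linear programs, a deterministic extreme-point solver reproduces $x_\tau^{k'}=x_\tau^k$ for $\tau\leq t-1$ and then faces at stage $t$ exactly the program defining $x_t^\ast(j)$ (same cost $c_{tj}$, same feasible set $X_t(x_{t-1}^k,\xi_{tj})$, same recourse $\mathcal{Q}_{t+1}^{k_0}$), so it returns $x_t^{k'}=x_t^\ast(j)$. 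The delicate point is multiple optimal extreme points, which is handled by the determinism of the solver assumed in Lemma \ref{lemmafinitecuts} (identical linear programs yield identical outputs).

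Finally I conclude optimality. Running the same argument at the first stage, $(\mathcal{B}_2)$ gives that the forward value $c_1^\top x_1^k+\mathcal{Q}_2^{k_0}(x_1^k)$ equals $\mathcal{Q}_1(x_0)$. More importantly, for any scenario the frozen policy that at each stage selects a minimizer of $c_t^\top x_t+\mathcal{Q}_{t+1}^{k_0}(x_t)$ over $X_t(x_{t-1},\xi_t)$ visits only states that arise as forward trial points of a recurring iteration; at such states $(\mathcal{B}_{t+1})$ gives $\mathcal{Q}_{t+1}^{k_0}=\mathcal{Q}_{t+1}$, and since $\mathcal{Q}_{t+1}^{k_0}\leq\mathcal{Q}_{t+1}$ everywhere, each such minimizer also solves the true Bellman problem $\inf_{x_t}\,c_t^\top x_t+\mathcal{Q}_{t+1}(x_t)$. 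Hence the frozen policy satisfies the exact dynamic programming equations \eqref{firststodp}--\eqref{secondstodp} and is optimal; as it is attained at the finite iteration $k_0$, CuSMuDA converges with probability one in finitely many iterations, which is the claim.
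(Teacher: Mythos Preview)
Your proposal is correct and follows essentially the same route as the paper: restrict to the full-measure event from Lemma~\ref{lemmafinitecuts} and from (H4), then run a backward induction showing that the frozen approximations are exact at all states visited by the frozen policy, using scenario recurrence to reach every realization $\xi_{tj}$ at each trial point. The only stylistic difference is that the paper argues the inductive step by contradiction (if $\mathcal{Q}_{t+1}^{k_0}(x_t^*)<\mathcal{Q}_{t+1}(x_t^*)$ then at a recurring iteration $\ell>k_0$ the backward cut $\mathcal{C}_{t+1,m}^{\ell}$ would strictly dominate $\mathfrak{Q}_{t+1}^{k_0}$ at $x_t^{\ell}$, violating stabilization), whereas you argue it directly by identifying the backward minimizer $x_t^*(j)$ with a forward trial point $x_t^{k'}$ via determinism of the solver and then invoking $(\mathcal{B}_{t+1})$; both are equivalent under the extreme-point/determinism hypothesis already used in Lemma~\ref{lemmafinitecuts}.
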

\begin{proof} Let $\Omega_1$ be defined as in the proof of Lemma \ref{lemmafinitecuts}  and  
let $\Omega_2$ be the event such that $k_0$ defined in Lemma \ref{lemmafinitecuts} is finite.
Note that $\Omega_1 \cap \Omega_2$ has probability $1$.
Consider a realization of CuSMuDA in $\Omega_1 \cap \Omega_2$ corresponding to realizations
$(\tilde \xi^k_{1:T})_k$ of  $(\xi^k_{1:T})_k$ in the forward pass 
and let $(x_1^*, x_2^*( \cdot )$, $\ldots , x_T^*( \cdot ) )$
be the policy obtained from iteration $k_0$ on which uses recourse functions $\mathcal{Q}_{t+1}^{k_0}$ instead of $\mathcal{Q}_{t+1}$.
Recall that policy $(x_1^*, x_2^*( \cdot )$, $\ldots , x_T^*( \cdot ) )$ is optimal if for every realization
${\tilde \xi}_{1:T} := (\xi_1, {\tilde \xi}_2, \ldots,{\tilde \xi}_T)$ of $\xi_{1:T}:=(\xi_1, \xi_2,\ldots,\xi_T)$, we have that $x_t^* (\xi_1, {\tilde \xi}_2, \ldots,{\tilde \xi}_t )$
solves
\begin{equation}\label{optline}
\mathfrak{Q}_t ( x_{t-1}^{*}( {\tilde \xi}_{1:t-1} ), {\tilde \xi}_t   ) =
\displaystyle \inf_{x_t} \{ {\tilde c}_t^T x_t  + \mathcal{Q}_{t+1}( x_t) \;:\; x_t \in X_t( x_{t-1}^* ({\tilde \xi}_{1:t-1})  , {\tilde \xi}_t )    \}  
\end{equation}
for every $t=1,\ldots,T$, with the convention that $x_{0}^* = x_0$. 
We prove for $t=1,\ldots,T$, 
$$
\begin{array}{l}
{\overline{\mathcal{H}}}(t): \mbox{ for every }k \geq k_0 \mbox{ and for every sample }{\tilde \xi}_{1:t}=(\xi_1, {\tilde \xi}_2, \ldots,{\tilde \xi}_t) \mbox{ of }(\xi_1,\xi_2,\ldots,\xi_t), \mbox{ we have}\\
\hspace*{1.1cm}{\underline{\mathfrak{Q}}}_t^{k} ( x_{t-1}^{*}( {\tilde \xi}_{1:t-1} ) , {\tilde \xi}_{t}  )= \mathfrak{Q}_t ( x_{t-1}^{*}( {\tilde \xi}_{1:t-1} ) , {\tilde \xi}_t  ).
\end{array}
$$
We show ${\overline{\mathcal{H}}}(1),\ldots,{\overline{\mathcal{H}}}(T)$ by induction.
${\overline{\mathcal{H}}}(T)$ holds since 
${\underline{\mathfrak{Q}}}_T^{k} = \mathfrak{Q}_T$
for every $k$. 
Now assume that ${\overline{\mathcal{H}}}(t+1)$ holds for some $t \in \{1,\ldots,T-1\}$.
We want to show ${\overline{\mathcal{H}}}(t)$.
Take an arbitrary $k \geq k_0$ and a  sample  ${\tilde \xi}_{1:t-1}=(\xi_1,{\tilde \xi}_2, \ldots,{\tilde \xi}_{t-1})$ of
$(\xi_1,\xi_2,\ldots,\xi_{t-1})$. We have for every $j=1,\ldots,M_t$, that
\begin{equation}\label{eqQtapprox}
\begin{array}{lll}
{\underline{\mathfrak{Q}}}_t^{k} ( x_{t-1}^{*}( {\tilde \xi}_{1:t-1} ) , \xi_{t j}  )&=&
\left\{
\begin{array}{l}
\inf \;c_{t j}^T x_t  + \mathcal{Q}_{t+1}^k ( x_t )\\
x_t \in X_t( x_{t-1}^{*}( {\tilde \xi}_{1:t-1} ), \xi_{t j})
\end{array}
\right.\\
&=& c_{t j}^T x_{t}^{*}( {\tilde \xi}_{1:t-1}, \xi_{t j} ) + \mathcal{Q}_{t+1}^{k} ( x_{t}^{*}( {\tilde \xi}_{1:t-1}, \xi_{t j} ) ).
\end{array}
\end{equation}
Now we check that for $j=1,\ldots,M_t$, we have
\begin{equation}\label{indcstep}
\mathcal{Q}_{t+1}^{k} ( x_{t}^{*}( {\tilde \xi}_{1:t-1}, \xi_{t j} ) ) = \mathcal{Q}_{t+1}  ( x_{t}^{*}( {\tilde \xi}_{1:t-1}, \xi_{t j} ) ).
\end{equation}
Indeed, if this relation did not hold, since 
$\mathcal{Q}_{t+1}^{k}  \leq \mathcal{Q}_{t+1} $, we would have 
$$
\mathcal{Q}_{t+1}^{k_0} ( x_{t}^{*}( {\tilde \xi}_{1:t-1}, \xi_{t j} ) ) =\mathcal{Q}_{t+1}^{k} ( x_{t}^{*}( {\tilde \xi}_{1:t-1}, \xi_{t j} ) ) < \mathcal{Q}_{t+1}  ( x_{t}^{*}( {\tilde \xi}_{1:t-1}, \xi_{t j} ) ).
$$
From the definitions of $\mathcal{Q}_{t+1}^k, \mathcal{Q}_{t+1}$, there exists $m \in \{1,\ldots,M_{t+1}\}$ such that
$$
\mathfrak{Q}_{t+1}^{k_0}( x_{t}^{*}( {\tilde \xi}_{1:t-1}, \xi_{t j} )  , \xi_{t+1 m})<\mathfrak{Q}_{t+1}( x_{t}^{*}( {\tilde \xi}_{1:t-1}, \xi_{t j} )  , \xi_{t+1 m}).
$$
Since the realization of CuSMuDA is in $\Omega_1$, there exists an infinite set of iterations such that the sampled scenario for stages 
$1,\ldots,t$, is $(\tilde \xi_{1:t-1}, \xi_{t j})$. Let $\ell$ be one of these iterations strictly greater than $k_0$.
Using ${\overline{\mathcal{H}}}(t+1)$, we have that 
$$
{\underline{\mathfrak{Q}}}_{t+1}^{\ell}( x_{t}^{*}( {\tilde \xi}_{1:t-1}, \xi_{t j} )  , \xi_{t+1 m}) =\mathfrak{Q}_{t+1}( x_{t}^{*}( {\tilde \xi}_{1:t-1}, \xi_{t j} )  , \xi_{t+1 m})
$$
which yields
$$
\mathfrak{Q}_{t+1}^{k_0}( x_{t}^{*}( {\tilde \xi}_{1:t-1}, \xi_{t j} )  , \xi_{t+1 m})<{\underline{\mathfrak{Q}}}_{t+1}^{\ell}( x_{t}^{*}( {\tilde \xi}_{1:t-1}, \xi_{t j} )  , \xi_{t+1 m})
$$
and at iteration $\ell>k_0$ we would construct a cut for $\mathfrak{Q}_{t+1}(\cdot, \xi_{t+1 m})$ at $x_{t}^{*}( {\tilde \xi}_{1:t-1}, \xi_{t j} )=x_t^{\ell}$
with value 
$${\underline{\mathfrak{Q}}}_{t+1}^{\ell}( x_{t}^{*}( {\tilde \xi}_{1:t-1}, \xi_{t j} )  , \xi_{t+1 m})$$ strictly larger than the value
at this point of all cuts computed up to iteration $k_0$.
Due to Lemma \ref{lemmafinitecuts}, this is not possible. Therefore, \eqref{indcstep} holds, which, plugged into \eqref{eqQtapprox}, gives
$$
{\underline{\mathfrak{Q}}}_t^{k} ( x_{t-1}^{*}( {\tilde \xi}_{1:t-1} ) , \xi_{t j}  )=
c_{t j}^T x_{t}^{*}( {\tilde \xi}_{1:t-1}, \xi_{t j} ) + \mathcal{Q}_{t+1} ( x_{t}^{*}( {\tilde \xi}_{1:t-1}, \xi_{t j} ) )
\geq \mathfrak{Q}_t ( x_{t-1}^{*}( {\tilde \xi}_{1:t-1} ) , \xi_{t j}  )
$$
(recall that $x_{t}^{*}( {\tilde \xi}_{1:t-1}, \xi_{t j} ) \in X_t( x_{t-1}^{*}( {\tilde \xi}_{1:t-1}),   \xi_{t j})$).
Since ${\underline{\mathfrak{Q}}}_t^{k} \leq \mathfrak{Q}_t$, we have shown
$ {\underline{\mathfrak{Q}}}_t^{k} ( x_{t-1}^{*}( {\tilde \xi}_{1:t-1} ) , \xi_{t j}  )= \mathfrak{Q}_t ( x_{t-1}^{*}( {\tilde \xi}_{1:t-1} ) , \xi_{t j}  )$
for every $j=1,\ldots,M_t$, which is ${\overline{\mathcal{H}}}(t)$. 
Therefore, we have proved that for $t=1,\ldots,T,$ for every realization $(\tilde \xi_{1:t-1}, \xi_{t j})$ of $\xi_{1:t}$, $x_{t}^{*}( {\tilde \xi}_{1:t-1}, \xi_{t j} )$
satisfies 
$c_{t j}^T  x_{t}^{*}( {\tilde \xi}_{1:t-1}, \xi_{t j} ) + \mathcal{Q}_{t+1}( x_{t}^{*}( {\tilde \xi}_{1:t-1}, \xi_{t j} ) ) = \mathfrak{Q}_t ( x_{t-1}^{*}( {\tilde \xi}_{1:t-1} ) , \xi_{t j}  )$,
meaning that for every $j=1,\ldots,M_t$,
$x_{t}^{*}( {\tilde \xi}_{1:t-1}, \xi_{t j} )$
is an optimal solution of \eqref{optline} written with $\tilde \xi_{1:t} = (\tilde \xi_{1:t-1}, \xi_{t j})$ and completes the proof.
\end{proof}
\begin{remark} The convergence proof above also shows the almost sure convergence in a finite
number of iterations for cut selection strategies that would always select more cuts
than any cut selection strategy satisfying Assumption (H3).
It also shows the convergence of Level $H$ cut selection from (Philpott et al. 2012) which
keeps the $H$ cuts having the largest values at each trial point.
\end{remark}

\section{Application to portfolio selection and inventory management}\label{sec:numsim}

\subsection{Portfolio selection} \label{portfolio}

\subsubsection{Model} \label{portfoliomodel}

We consider a portfolio selection problem with direct transaction costs over a discretized horizon
of $T$ stages. The direct buying and selling  transaction costs are
proportional to the amount of the transaction 
(Ben-Tal et al. 2000, Best and Hlouskova 2007).\footnote{This portfolio problem was solved using SDDP and SREDA (Stochastic REgularized Decomposition
Algorithm) in (Guigues et al. 2017).} 
Let $x_t( i )$ be the dollar value of asset $i=1,\ldots,n+1$ at the end of stage $t=1,\ldots,T$,
where asset $n+1$ is cash; $\xi_t(i)$ is the return of asset $i$ at $t$; 
$y_t(i)$ is the amount of asset $i$ sold at the end of $t$; 
$z_t(i)$ is the amount of asset $i$ bought at the end of $t$, 
$\eta_t(i) > 0$ and $\nu_t(i) > 0$ are respectively the proportional selling and buying transaction costs at $t$.
Each component $x_0(i),i=1,\ldots,n+1$, of $x_0$ is known.
The budget available at the beginning of the investment period is
$\sum_{i=1}^{n+1} \xi_{1}(i) x_{0}(i)$ and
$u(i)$ represents the maximal amount that can be invested in asset $i$. 

For $t=1,\ldots,T$, given a portfolio $x_{t-1}=(x_{t-1}(1),\ldots,x_{t-1}(n), x_{t-1}( n+1) )$ and $\xi_t$, we define the set $X_t(x_{t-1}, \xi_t)$
as the set of $(x_t, y_t, z_t) \in \mathbb{R}^{n+1} \small{\times} \mathbb{R}^{n} \small{\times} \mathbb{R}^{n}$ satisfying
\begin{equation}
x_t( n+1 ) = \xi_{t}( n+1 ) x_{t-1}( n+1 )   +\sum\limits_{i=1}^{n} \Big((1-\eta_t( i) )y_t( i )-  (1+\nu_t ( i ))z_t( i )\Big), \label{p1_3}
\end{equation}
and for $i=1,\ldots,n$,

\begin{subequations}\label{MI-CONS0}
\begin{align}
x_{t}(i)&= \xi_{t}(i) x_{t-1}(i)-y_t(i) +z_t(i),  \label{p1_2}\\
y_t(i) &\leq  \xi_{t}(i) x_{t-1}(i),  \label{p1_4}\\
x_t(i) &\leq u(i)  \sum\limits_{j=1}^{n+1} \xi_{t}(j) x_{t-1}(j),  \label{p1_5}\\
x_t(i) &\ge 0, y_t(i) \geq 0, z_t(i) \geq 0.    \label{p1_6}
\end{align}
\end{subequations}
Constraints \eqref{p1_3} are the cash flow balance constraints and define how much cash is available at each stage.
Constraints \eqref{p1_2} define the amount of security $i$ held at each stage $t$ and take into account the proportional transaction costs.
Constraints \eqref{p1_4} preclude selling an amount larger than the amount held. 
Constraints \eqref{p1_5} prevent the position in security $i$ at time $t$ from exceeding a proportion $u(i)$.
Constraints \eqref{p1_6} prevent short-selling and enforce the non-negativity of the amounts bought and sold. 

With this notation, the following dynamic programming equations of a risk-neutral portfolio model can be written:
for $t=T$, setting $\mathcal{Q}_{T+1}( x_T ) = \mathbb{E}[ \sum\limits_{i=1}^{n+1} \xi_{T+1}(i) x_{T}(i) ]$
we solve the problem
\begin{equation}\label{eq1dp}
\mathfrak{Q}_T \left( x_{T-1}, \xi_T \right)=
\left\{
\begin{array}{l}
\sup \; \mathcal{Q}_{T+1}( x_T )  \\
(x_T, y_T, z_T) \in X_T(x_{T-1}, \xi_T),
\end{array}
\right.
\end{equation}
while at stage $t=T-1,\dots,1$, we solve
\begin{equation}\label{eq2dp}
\mathfrak{Q}_t\left( x_{t-1}, \xi_t  \right)=
\left\{
\begin{array}{l}
\sup  \;  Q_{t+1}\left( x_{t} \right) \\
(x_t, y_t, z_t) \in X_t(x_{t-1}, \xi_t) , 
\end{array}
\right.
\end{equation}
where for $t=2,\ldots,T$, $\mathcal{Q}_t(x_{t-1})=\mathbb{E}[ \mathfrak{Q}_t\left( x_{t-1}, \xi_t  \right) ]$.
With this model, we maximize the expected return of the portfolio taking into account the transaction costs,
non-negativity constraints, and bounds imposed on the different securities.

\subsubsection{CuSMuDA for portfolio selection}

We assume that the return process $(\xi_t)$ satisfies Assumption (H1).\footnote{It is possible (at the expense of the
computational time) to incorporate stagewise dependant returns within the decomposition algorithms under consideration, for instance
including in the state vector the relevant history of the returns as in (Infanger and Morton 1996,  Guigues 2014).}
In this setting, we can solve the portfolio problem under consideration using
MuDA and CuSMuDA. For the sake of completeness, we show how to apply MuDA to this problem,
including the stopping criterion (CuSMuDA follows, incorporating one of the pseudo-codes from Figure \ref{figurecut1}). 
In this implementation, $N$ independent scenarios ${\tilde \xi}^k, k=(i-1)N+1,\ldots,iN$, of $(\xi_1,\xi_2,\ldots,\xi_T)$
are sampled in the forward pass of iteration $i$ to obtain $N$ sets of trial points (note that the convergence proof of Theorem \ref{convproof}
still applies for this variant of CuSMuDA). At the end of iteration $i$, we end up with approximate functions
$\mathfrak{Q}_t^i (x_{t-1} , \xi_{t j} ) = \max_{1 \leq \ell \leq iN} \langle \beta_{t j}^\ell ,  x_{t-1} \rangle$ for
$\mathfrak{Q}_t(\cdot, \xi_{t j})$  (observe that for this problem the cuts have no intercept). \\

\par {\textbf{Forward pass of iteration $i$.}} We generate $N$ scenarios 
${\tilde \xi}^k = (\tilde \xi_1^k,, \tilde \xi_2^k,\ldots,\tilde \xi_T^k), k=(i-1)N+1,\ldots,iN$, of $(\xi_2,\ldots,\xi_T)$
and solve for $k=(i-1)N+1,\ldots,iN$, and $t=1,\ldots,T-1$, the problem
$$
\left\{
\begin{array}{l}
\displaystyle \inf_{x_t, f} \; \sum_{\ell =1}^{M_{t+1}} p_{t+1 \ell} f_{\ell}\\
x_t \in X_t ( x_{t-1}^k , {\tilde \xi}_{t}^k  ),\\
f_{\ell} \geq \langle \beta_{t+1 \ell}^{m} ,  x _t\rangle, m=1,\ldots,(i-1)N, \ell =1, \ldots, M_{t+1},
\end{array}
\right.
$$
starting from $(x_{0}^k , {\tilde \xi}_1^k ) = (x_{0} , \xi_1 )$.\footnote{We use minimization instead of maximization subproblems. In this context, the optimal mean income is the opposite of the optimal value of the first stage problem.}
Let $x_{t}^k$ be an optimal solution.
For $t=T$ and $k=(i-1)N+1,\ldots,iN$, we solve 
$$
\left\{
\begin{array}{l}
\inf  \;- x_T^T  \mathbb{E}[\xi_{T+1}]  \\
x_T \in X_{T}( x_{T-1}^k , {\tilde \xi}_{T}^k  ),
\end{array}
\right.
$$
with optimal solution $x_T^k$.

In the end of the forward pass we compute 
the empirical mean ${\overline{\tt{Cost}}}^i$ and standard deviation $\sigma^i$ of the
cost on the sampled scenarios of iteration $i$:
\begin{equation} \label{formulastopping}
{\overline{\tt{Cost}}}^i = -\frac{1}{N}\sum_{k=(i-1)N + 1}^{iN} \mathbb{E}[\xi_{T+1}]^T  x_T^k, \;\;\sigma^i = \sqrt{\frac{1}{N} \sum_{k=(i-1)N+1}^{iN} \Big(-\mathbb{E}[\xi_{T+1}]^T  x_T^k  - {\overline{\tt{Cost}}}^i \Big)^2}. 
\end{equation}
This allows us to compute the upper end $z_{\sup}^i$ of a one-sided confidence interval on the
mean cost of the policy obtained at iteration $i$ given by
\begin{equation}\label{formulazsup}
z_{\sup}^i = {\overline{\tt{Cost}}}^i + \frac{\sigma^i}{\sqrt{N}} \Phi^{-1}(1-\alpha)
\end{equation}
where $\Phi^{-1}(1-\alpha)$ is the $(1-\alpha)$-quantile of the standard Gaussian distribution.\\
\par {\textbf{Backward pass of iteration $i$.}} For $k=(i-1)N+1,\ldots,iN$, we solve for $t=T$, $j=1,\ldots,M_T$,
\begin{equation}\label{back1equ}
\left\{
\begin{array}{l}
\inf  \;- x_T^T \mathbb{E}[\xi_{T+1}] \\
x_T \in X_T ( x_{T-1}^k , \xi_{T j}  )
\end{array}
\right.
\end{equation}
and for $t=T-1,\ldots,2$, $j=1,\ldots,M_t$,
\begin{equation}\label{back2equ}
\left\{
\begin{array}{l}
\displaystyle \inf_{x_t, f}  \; \sum_{\ell=1}^{M_{t+1}} p_{t+1 \ell} f_{\ell} \\
x_t \in X_t( x_{t-1}^k , \xi_{t j}  ),\\
f_{\ell} \geq \langle \beta_{t+1 \ell}^{m} ,  x _t \rangle, m=1,\ldots,iN, \ell=1,\ldots,M_{t+1}.
\end{array}
\right.
\end{equation}
For stage $t$ problem above with realization $\xi_{t j}$ of $\xi_t$ (problem \eqref{back1equ} for $t=T$ and \eqref{back2equ} for $t<T$), let
$\lambda_{t j}^k$ be the optimal Lagrange multipliers associated to the equality constraints
and let $\mu_{1 t j}^k \geq 0, \mu_{2 t j}^k \geq 0$ be the optimal Lagrange multipliers associated with respectively
the constraints $y_t(i) \leq  \xi_{t j}(i) x_{t-1}(i)$ and 
$x_t(i) \leq u(i)  \sum\limits_{\ell=1}^{n+1} \xi_{t j}(\ell) x_{t-1}(\ell)$. We compute
$$
\beta_{t j}^k = \Big( \lambda_{t j}^k - \langle u, \mu_{2 t j}^k \rangle {\textbf{e}} - \left[ \begin{array}{c}\mu_{1 t j}^k\\0  \end{array}   \right]  \Big) \circ \xi_{t j},
$$
where ${\textbf{e}}$ is a vector in $\mathbb{R}^{n+1}$ of ones and 
where for vectors $x, y$, the vector $x \circ y$ has components $(x \circ y)(i)=x(i) y(i)$.\\  
\par {\textbf{Stopping criterion.}}
In the end of the backward pass of iteration $i$, we solve
$$
\left\{
\begin{array}{l}
\displaystyle \inf_{x_1, f}  \;  \sum_{\ell=1}^{M_2} p_{2 \ell} f_{\ell}\\
x_1 \in X_1( x_{0} , \xi_1  ),\\
f_{\ell} \geq \langle \beta_{2 \ell}^{m} ,  x _1\rangle, m=1,\ldots,iN, \ell=1,\ldots,M_2,
\end{array}
\right.
$$
whose optimal value provides a lower bound $z_{\inf}^i$ on the optimal value
$\mathcal{Q}_1( x_0 )$ of the problem. 
Given a tolerance $\varepsilon>0$, 
the algorithm stops either when $z_{\inf}^i=0$ and $z_{\sup}^i \leq \varepsilon$ or when
\begin{equation}\label{stoppingcriterion} 
\left| z_{\sup}^i - z_{\inf}^i   \right| \leq \varepsilon \max(1, |z_{\sup}^i |  ).
\end{equation}
In the expression above, we use  $\varepsilon \max(1, |z_{\sup}^i |  )$ instead of $\varepsilon |z_{\sup}^i |$ in the right hand side to account for the case
$z_{\sup}^i=0$.

\subsubsection{Numerical results}

We compare three methods to solve the porfolio problem presented in the previous section:
MuDA with sampling that we have just described (denoted by {\tt{MuDA}} for short), CuSMuDA with Multicut Level 1 cut selection (denoted by {\tt{CuSMuDA CS 1}} for short) and 
CuSMuDA with MLM Level 1 cut selection  
(denoted by {\tt{CuSMuDA CS 2}} for short). 
The implementation was done in Matlab run on a laptop with Intel(R) Core(TM) i7-4510U CPU @ 2.00GHz.
All subproblems in the forward and backward passes were solved numerically using Mosek Optimization Toolbox (Andersen and Andersen 2013).

We fix
$u(i)=1, i=1,\ldots,n$, while $x_0$ has components uniformly distributed in $[0,10]$.
For the stopping criterion, we use $N=200, \alpha=0.025$, and
$\varepsilon=0.05$ in \eqref{stoppingcriterion}.
Below, we generate various instances of the portfolio problem as follows.
For fixed $T$ (number of stages [days for our experiment]) and $n$ (number of risky assets),
the distributions of $\xi_t(1:n)$ have $M_t=M$ realizations 
with $p_{t i}=\mathbb{P}(\xi_t = \xi_{t i})=1/M$, and
$\xi_1(1:n), \xi_{t 1}(1:n), \ldots, \xi_{t M}(1:n)$ chosen randomly among historical data of
daily returns of $n$ of the assets of the S\&P 500 index for the period 18/5/2009-28/5/2015.
For fixed $n$, the $n$ stocks chosen are given in Table \ref{liststocks}.\footnote{They correspond 
to the first $n$ stocks listed in our matrix of stock prices downloaded from Wharton Research Data Services (WRDS: {\url{https://wrds-web.wharton.upenn.edu/wrds/}}).}
The daily return $\xi_t(n+1)$ of the risk-free asset is $0.1$\% for all $t$.

\begin{table}
\begin{tabular}{|c|c|}
\hline 
Number of stocks & Stocks\\
\hline
$n=4$ & \begin{tabular}{l}  AAPL (Apple Inc.), XOM (Exxon Mobil Corp.),\\MSFT (Microsoft Corp.), JNJ (Johnson \& Johnson).\end{tabular}\\
\hline
$n=5$ &  AAPL, XOM, MSFT, JNJ, WFC (Wells Fargo).\\
\hline
$n=6$ &   AAPL, XOM, MSFT, JNJ, WFC, GE (General Electric)\\
\hline
\end{tabular}
re\caption{Subset of stocks chosen.}\label{liststocks}
\end{table}

Transaction costs are assumed to be known with
$\nu_t(i)=\mu_t(i)$
obtained sampling from the distribution of the random variable 
$0.08+0.06\cos(\frac{2\pi}{T} U_T )$ where $U_T$ is a random variable
with a discrete distribution over the set of integers $\{1,2,\ldots,T\}$. For our experiments the values $(5,4), (5,5), (5,6), (8,4), (8,5)$, and $(8,6)$ of the pair $(T,n)$ are considered.\\
  
\par {\textbf{Checking the implementations.}}
If all algorithms are correctly implemented the lower and upper bounds 
$z_{\inf}^K$ and $z_{\sup}^K$ computed at the last iteration $K$ of the algorithms
should be close to the optimal value $\mathcal{Q}_1( x_0)$ of the problem.
If we do not know the optimal value $\mathcal{Q}_1( x_0)$, we
can however check on Figure \ref{fig:f_1} that
the bounds for the 3 algorithms are very close to each
other at the last iteration. In Figure \ref{fig:f_1} we also observe, as expected,
that the lower bounds increase and the upper bounds 
tend to decrease along iterations. Note that few iterations are needed
to satisfy the stopping criterion for all instances. However, recall that at each iteration,
{\tt{MuDA}} computes $N M=200 \small{\times}20=4\,000$ cuts for each stage $t=2,\ldots,T$, meaning that, for instance,
for $T=5, n=6$, where $5$ iterations are needed for {\tt{MuDA}} to converge (see Figure \ref{fig:f_1}), $5(T-1)MN=80\,000$ cuts are
computed by {\tt{MuDA}}.\\
\begin{figure}
\centering
\begin{tabular}{cc}
\includegraphics[scale=0.6]{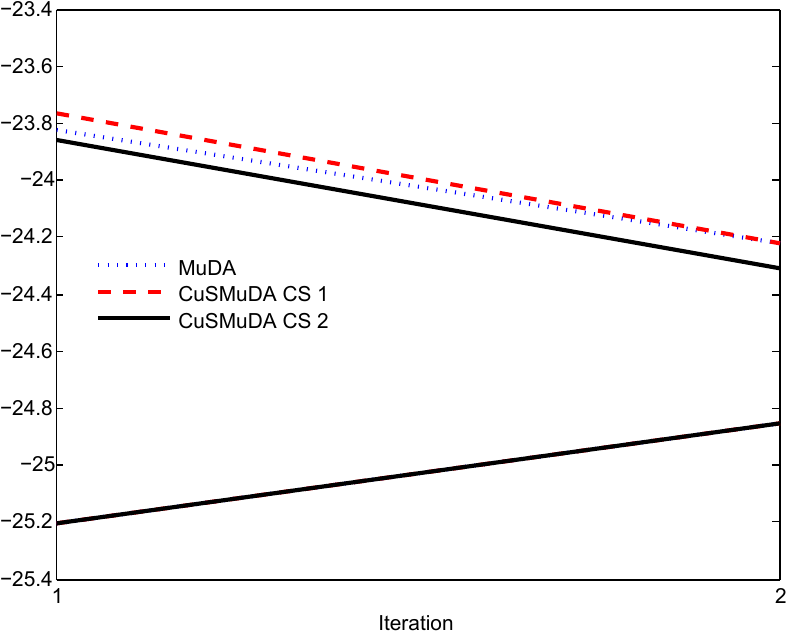}
&
\includegraphics[scale=0.6]{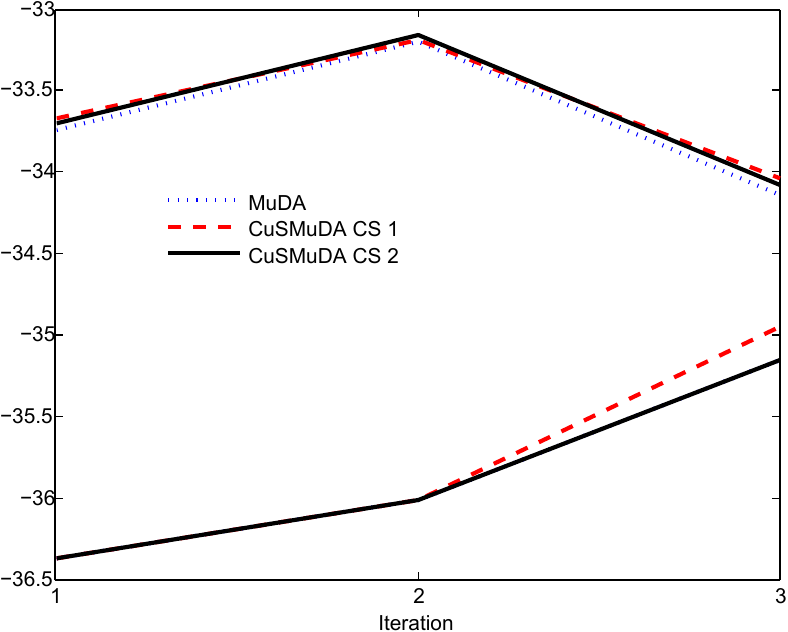}
\\
{$T=5, n=4, M=20$} & {$T=5, n=5, M=20$}\\
\includegraphics[scale=0.6]{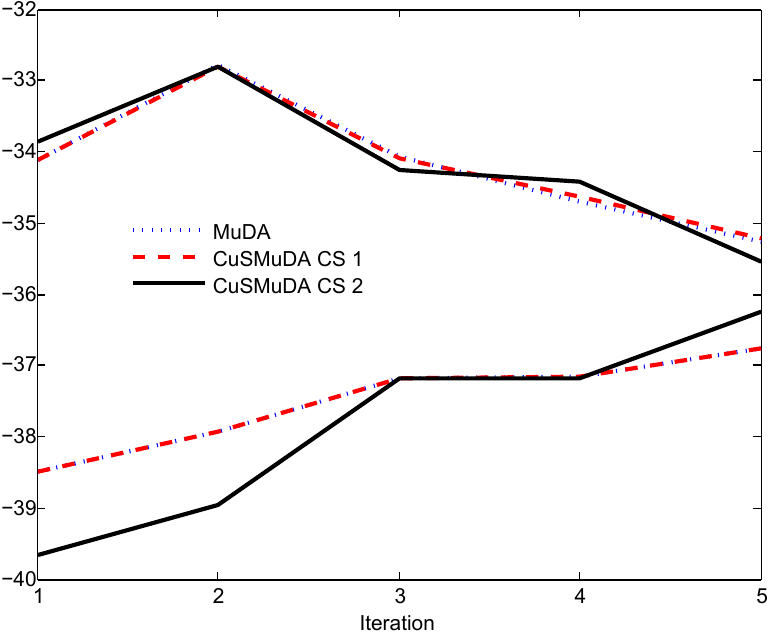}
&
\includegraphics[scale=0.6]{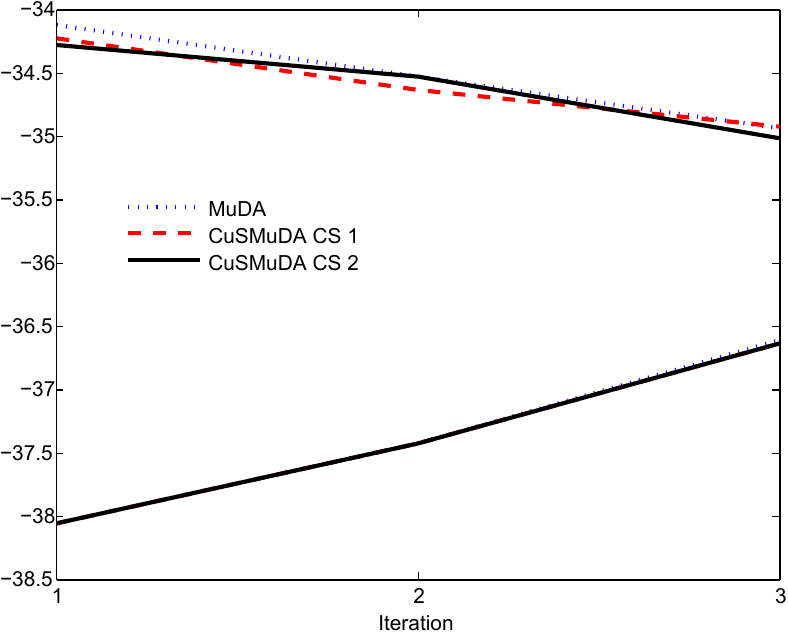}
\\
{$T=5, n=6, M=20$} &  {$T=8, n=4, M=10$}\\
\includegraphics[scale=0.6]{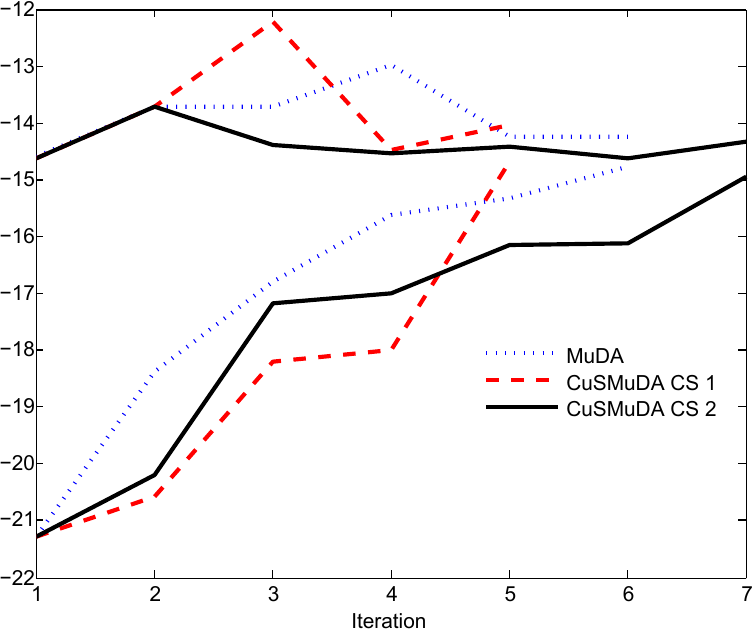}
&
\includegraphics[scale=0.6]{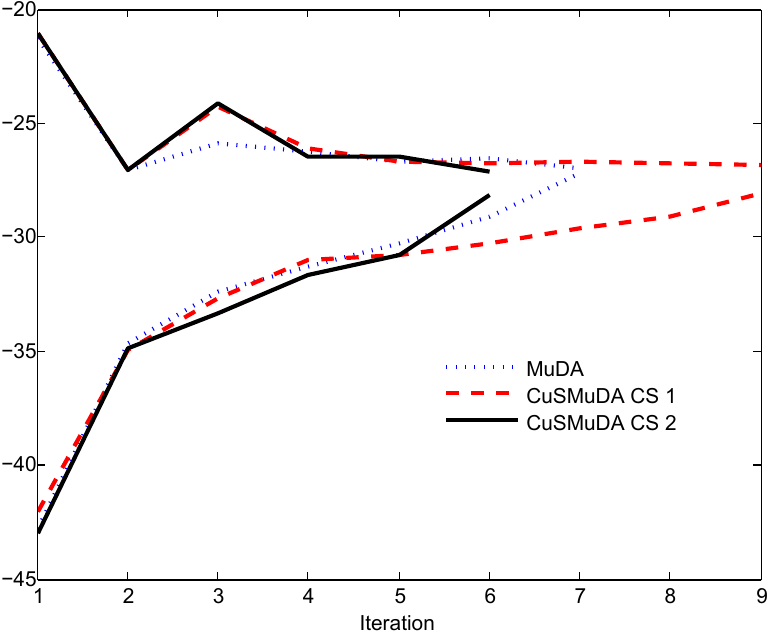}
\\
{$T=8, n=5, M=10$} & {$T=8, n=6, M=10$}\\
\end{tabular}
\caption{ \label{fig:f_1} Evolution of the upper bounds $z_{\sup}^i$ and lower bounds $z_{\inf}^i$
along the iterations of the algorithms.}
\end{figure}

\par {\textbf{Computational time.}} The computational time for solving 
several instances of the portfolio problem is given in Table \ref{tablerunningtime1}.
On these runs, we observe a huge variation in the computational time.
On all these runs, {\tt{CuSMuDA CS 2}} is much quicker than both {\tt{MuDA}} (between 5.1 and 12.6 times quicker)
and {\tt{CuSMuDA CS 1}} (between 10.3 and 21.9 times quicker).
To understand these results, let us analyze the proportion of cuts selected
by the two cut selection strategies.\\

\begin{table}
\centering
\begin{tabular}{|c||c|c|c|}
\hline
& {\tt{MuDA}} & {\tt{CuSMuDA CS 1}}  & {\tt{CuSMuDA CS 2}} \\
\hline
$T=5, n=4, M=20$ & 8.97  & 13.45     &  1.31  \\
\hline
$T=5, n=5, M=20$ & 17.03  &   29.41   &  2.38 \\
\hline
$T=5, n=6, M=20$ & 45.87  &   64.22   &  5.25 \\
\hline
$T=8, n=4, M=10$ &  7.94  &  16.27    &  1.56 \\
\hline
$T=8, n=5, M=10$ &  43.39 &  73.98    &  4.30 \\
\hline
$T=8, n=6, M=10$ & 63.47  &   110.55   &  5.05 \\
\hline
\end{tabular}
\caption{Computational time (in minutes) for solving instances of the portfolio problem of Section \ref{portfoliomodel} with {\tt{MuDA}}, {\tt{CuSMuDA CS 1}}, and {\tt{CuSMuDA CS 2}}.}
\label{tablerunningtime1}
\end{table}

\par {\textbf{Proportion of cuts selected.}} An important aspect of the cut selection strategies that directly impacts the
computational time is the proportion of cuts selected along the iterations of the algorithm. 

Selecting a small number of cuts, especially when many cuts have been computed, allows us to solve the problems in the backward and forward passes
much more quickly and can yield an important reduction in CPU time.
This is the case of {\tt{CuSMuDA CS 2}} which surprisingly succeeds in solving the problem selecting at each stage and
iteration a very small number of cuts, as can be seen in Figures \ref{fig:f_2}-\ref{fig:f_4}.
More precisely, the mean proportion of cuts selected (across iterations) is below 5\%
for all stages with {\tt{CuSMuDA CS 2}} (see Figure \ref{fig:f_2}). This means that a large number of cuts have the same value at some trial points and for each  such
trial point if {\tt{CuSMuDA CS 1}} selects all these cuts {\tt{CuSMuDA CS 2}} only selects the oldest of these cuts.

On the contrary, if nearly all cuts are selected then the time spent to select the cuts may not be compensated by the (small)
reduction in CPU time for solving the problems in the backward and forward passes.
This is the case of {\tt{CuSMuDA CS 1}} (see Figure \ref{fig:f_2}) where for all stages the mean proportion of cuts selected
is above 55\% and all cuts are selected at the last stage (as expected, recall Example \ref{excs1}) 
but also for some instances at other stages (stages $3$ and $4$ for the instance $(T,n,M)=(5,4,20)$,
stage $4$ for the instances $(T,n,M)=(5,5,20), (5,6,20), (8,5,20)$,
stages $6$ and $7$ for the instance $(T,n,M)=(8,4,10)$).

It is also interesting to note that in general, fewer cuts are selected for the first stages, especially for {\tt{CuSMuDA CS 1}}.
This makes sense since at the last stage, the cuts for functions $\mathfrak{Q}_T(\cdot, \xi_{T j})$ are exact (and as we recalled, all of them
are selected with  {\tt{CuSMuDA CS 1}}) but not necessarily for stages $t<T$ because for these stages approximate recourse functions
are used and the approximation errors {\em propagate} backward. Therefore it is expected that at the early stages old cuts (computed at the first
iterations using crude approximations of the recourse functions) will probably not be selected.

\begin{figure}
\centering
\begin{tabular}{cc}
\includegraphics[scale=0.6]{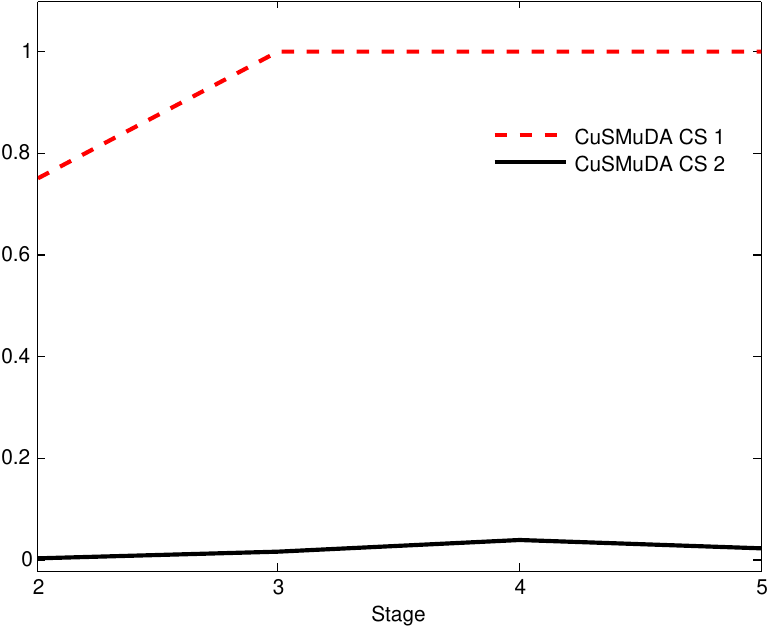}
&
\includegraphics[scale=0.6]{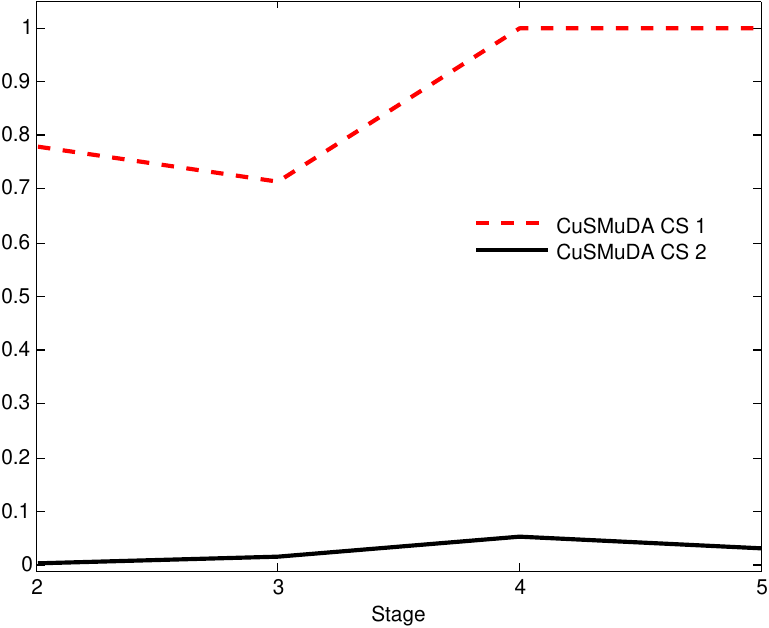}
\\
{$T=5, n=4, M=20$} & {$T=5, n=5, M=20$} \\
\includegraphics[scale=0.6]{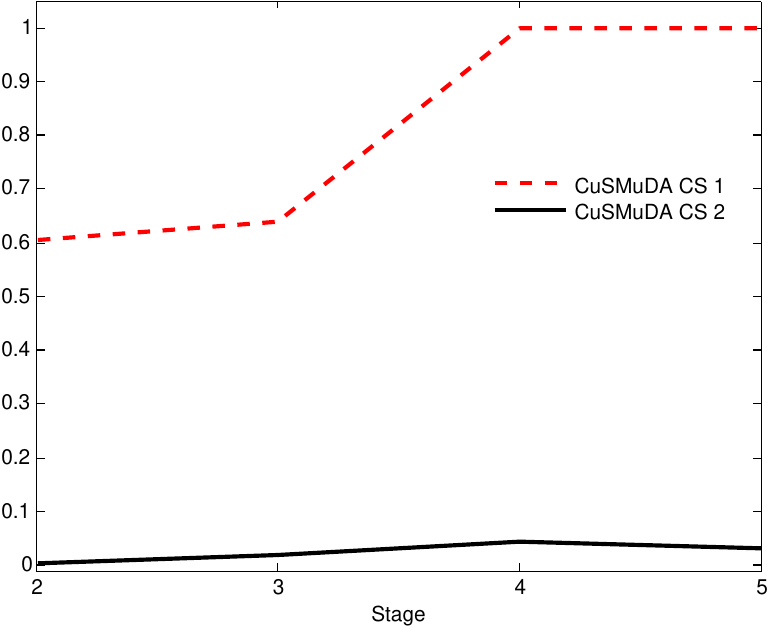}
&
\includegraphics[scale=0.6]{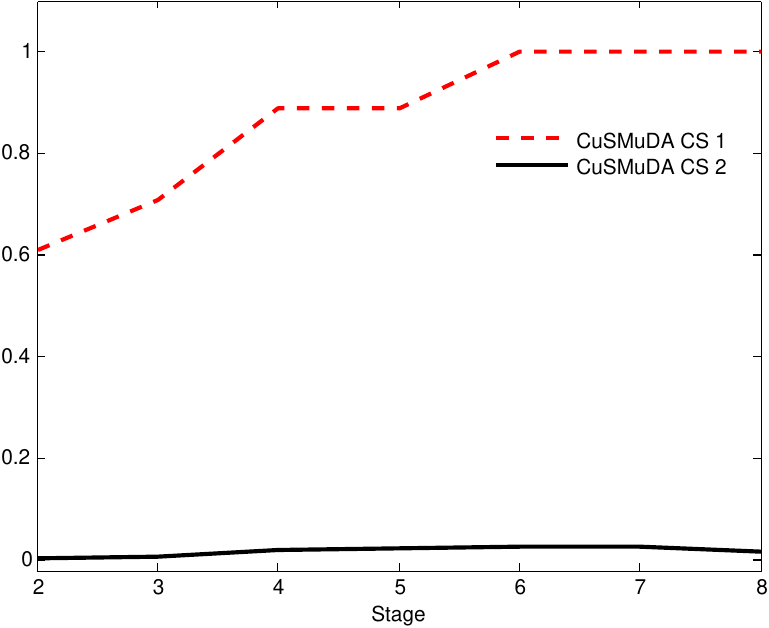}
\\
{$T=5, n=6, M=20$} & {$T=8, n=4, M=10$} \\
\includegraphics[scale=0.6]{Total_Nb_Cuts_T5_n5_M20.pdf}
&
\includegraphics[scale=0.6]{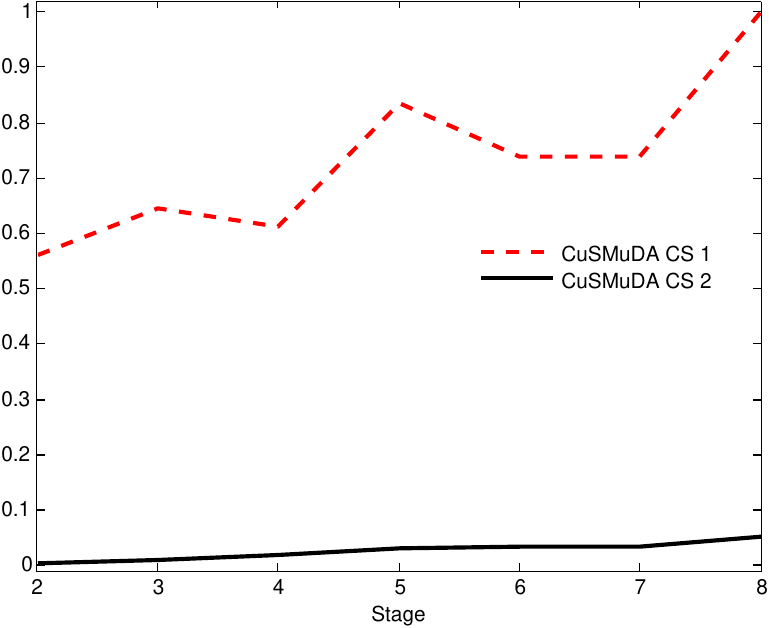}
\\
{$T=8, n=5, M=20$} & {$T=8, n=6, M=10$} \\
\end{tabular}
\caption{ \label{fig:f_2} Mean proportion of cuts (over the iterations of the algorithm) selected for stages $t=2,\ldots,T$ by 
{\tt{CuSMuDA CS 1}} and {\tt{CuSMuDA CS 2}}.}
\end{figure}

Similar conclusions can be drawn from  Figures \ref{fig:f_3}-\ref{fig:f_4} which
depict the evolution of the mean proportions of cuts (over the iterations of the algorithm) selected for 
$\mathfrak{Q}_t(\cdot, \xi_{t j})$ as a function of $j=1,\ldots,M$. 
Additionally, we see that for each stage $t$, the proportions of cuts selected 
for functions $\mathfrak{Q}_t(\cdot, \xi_{t j}), j=1,\ldots,M,$ are quite similar.

\begin{figure}
\centering
\begin{tabular}{cc}
\includegraphics[scale=0.6]{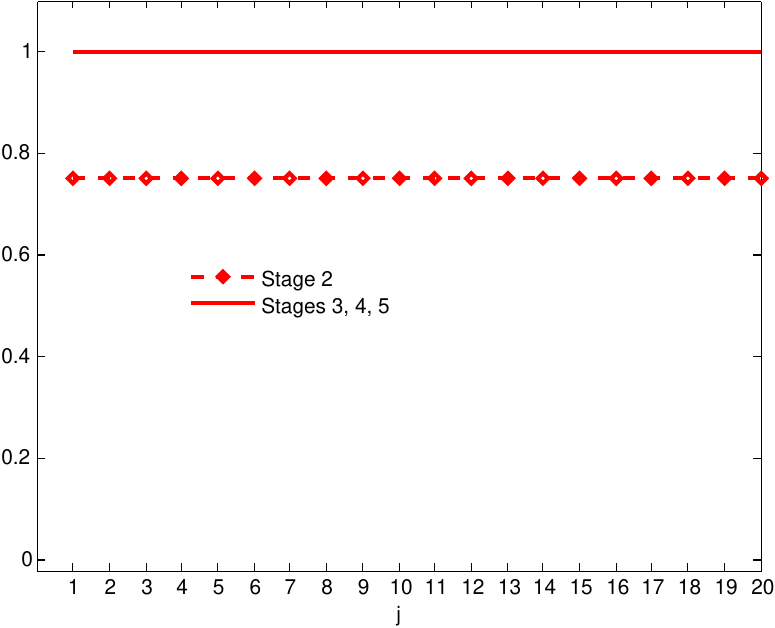}
&
\includegraphics[scale=0.64]{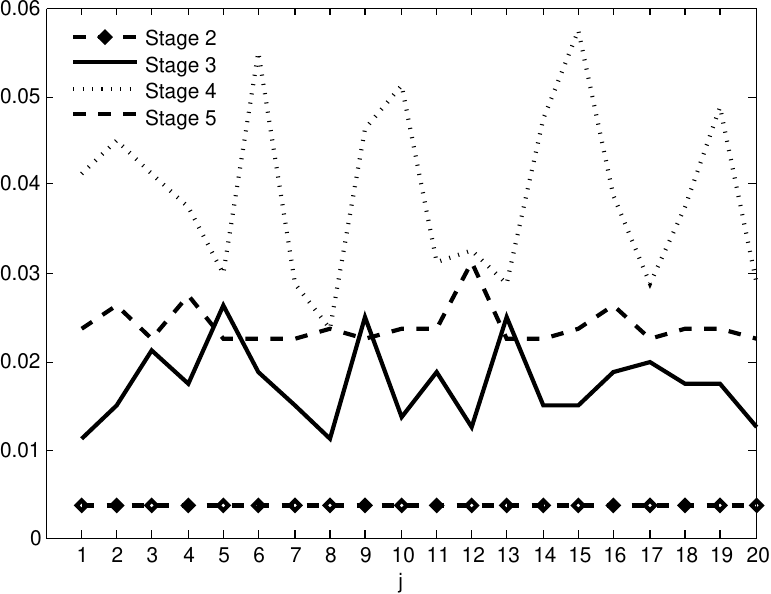}
\\
{$T=5, n=4, M=20$, {\tt{CuSMuDA CS 1}}} & {$T=5, n=4, M=20$, {\tt{CuSMuDA CS 2}}}
\\
\includegraphics[scale=0.6]{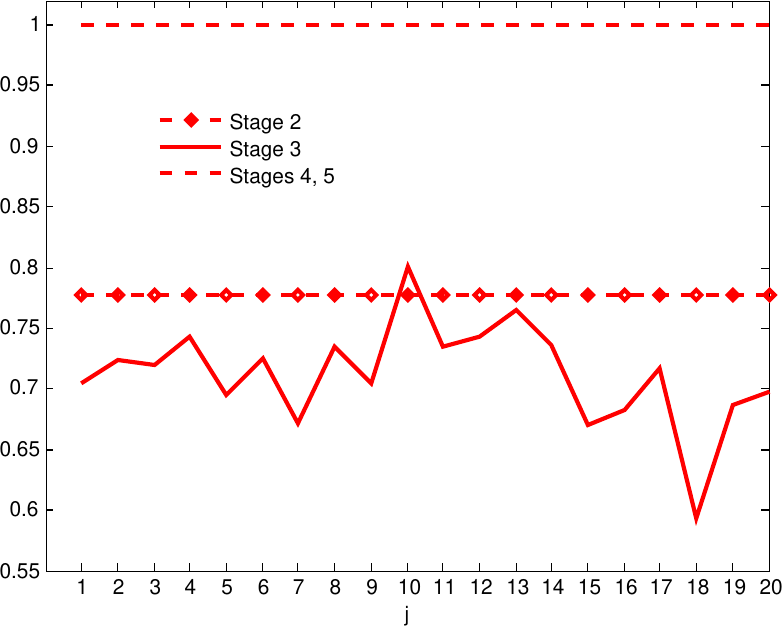}
&
\includegraphics[scale=0.6]{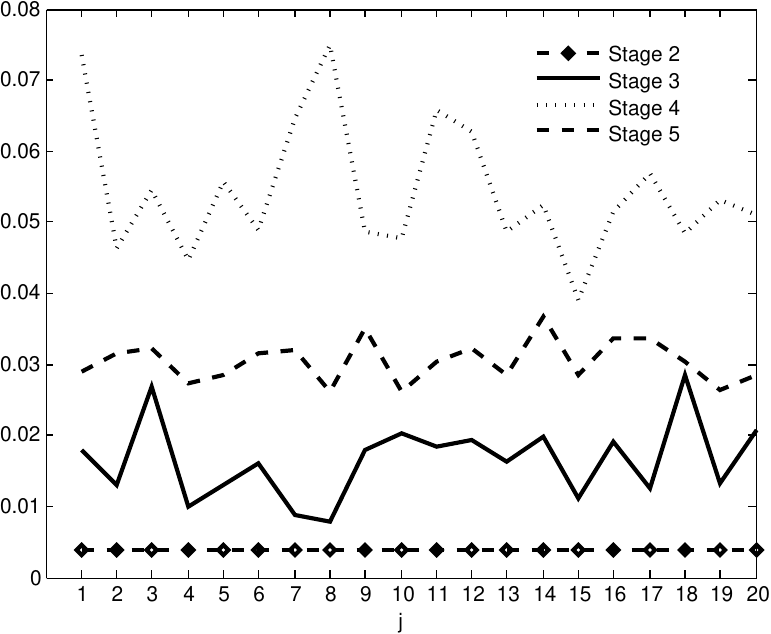}
\\
{$T=5, n=5, M=20$, {\tt{CuSMuDA CS 1}}} & {$T=5, n=5, M=20$, {\tt{CuSMuDA CS 2}}}
\\
\includegraphics[scale=0.6]{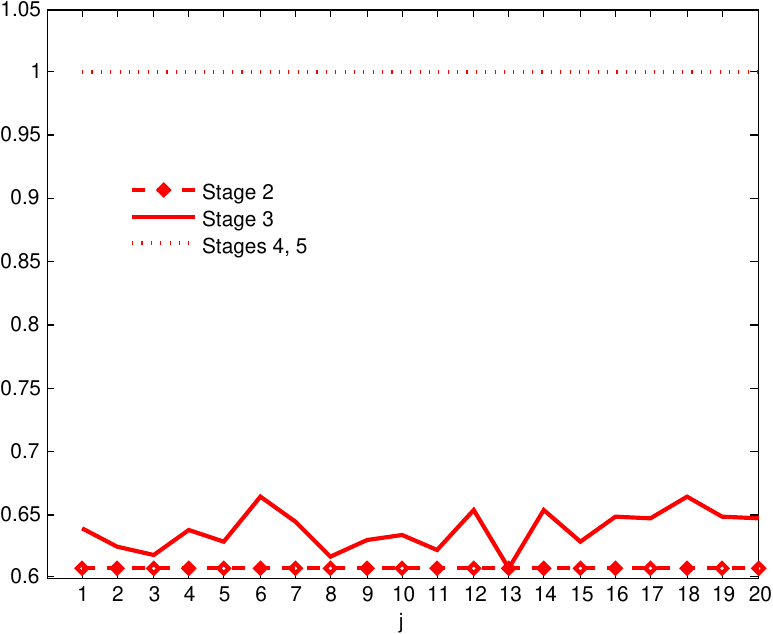}
&
\includegraphics[scale=0.6]{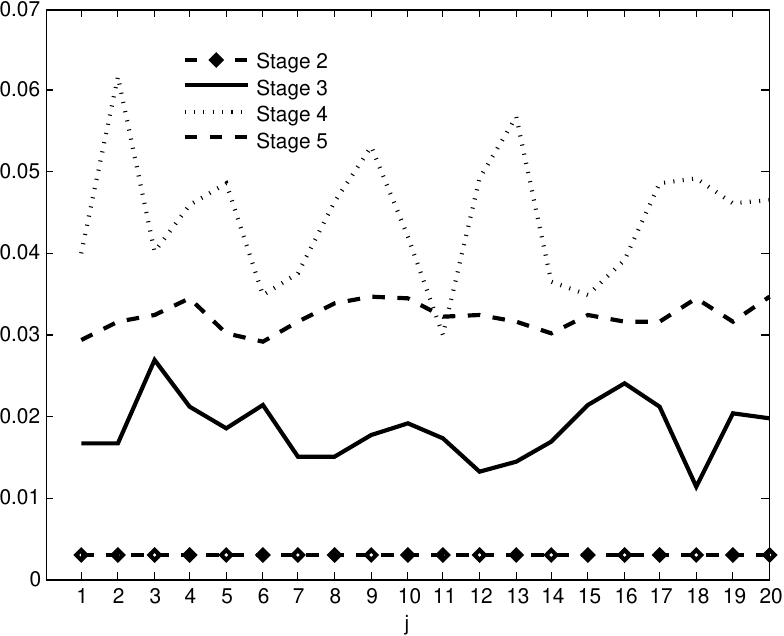}\\
{$T=5, n=6, M=20$, {\tt{CuSMuDA CS 1}}} & {$T=5, n=6, M=20$, {\tt{CuSMuDA CS 2}}}
\end{tabular}
\caption{ \label{fig:f_3}  Representation for stages $t=2,\ldots,T$, of the 
mean proportion of cuts (over the iterations of the algorithm) selected for 
$\mathfrak{Q}_t(\cdot, \xi_{t j})$ as a function of $j=1,\ldots,M$.
Left plots: {\tt{CuSMuDA CS 1}}, right plots: {\tt{CuSMuDA CS 2}}.}
\end{figure}

\begin{figure}
\centering
\begin{tabular}{cc}
\includegraphics[scale=0.6]{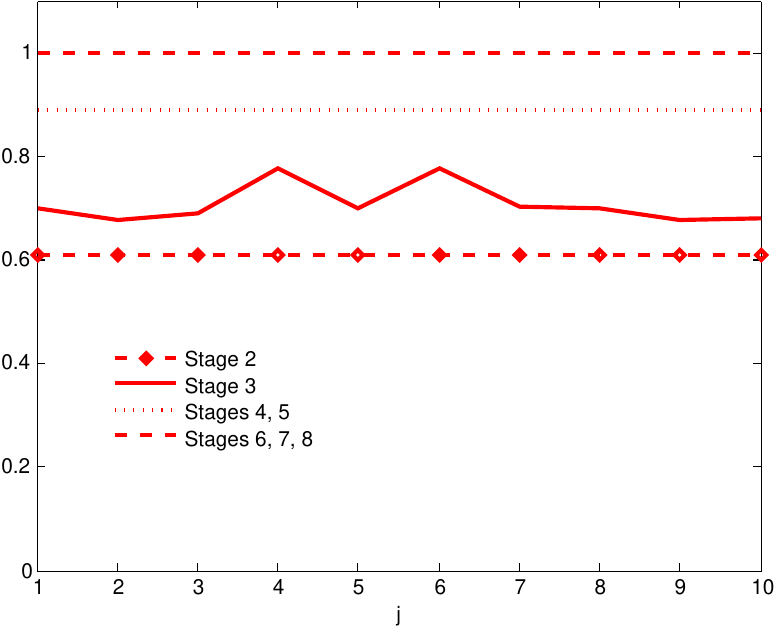}
&
\includegraphics[scale=0.6]{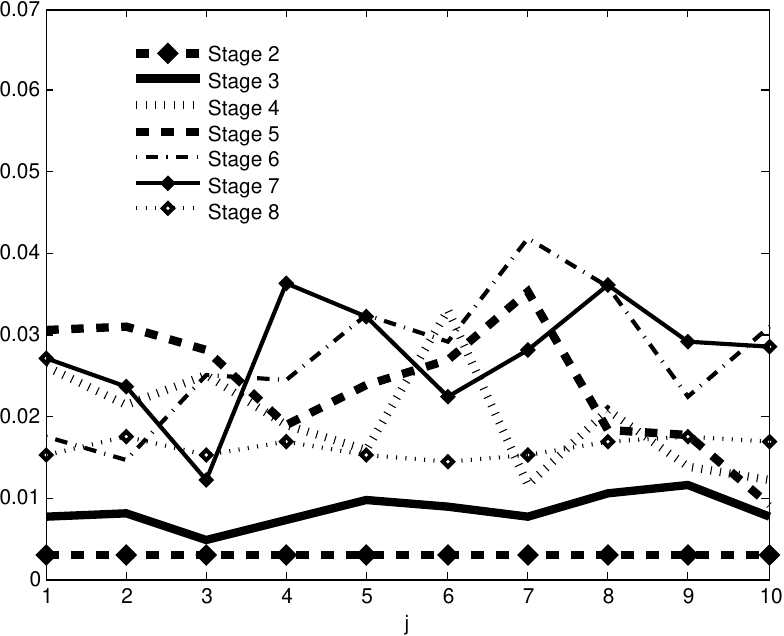}\\
{$T=8, n=4, M=10$, {\tt{CuSMuDA CS 1}}} & {$T=8, n=4, M=10$, {\tt{CuSMuDA CS 2}}}\\
\includegraphics[scale=0.6]{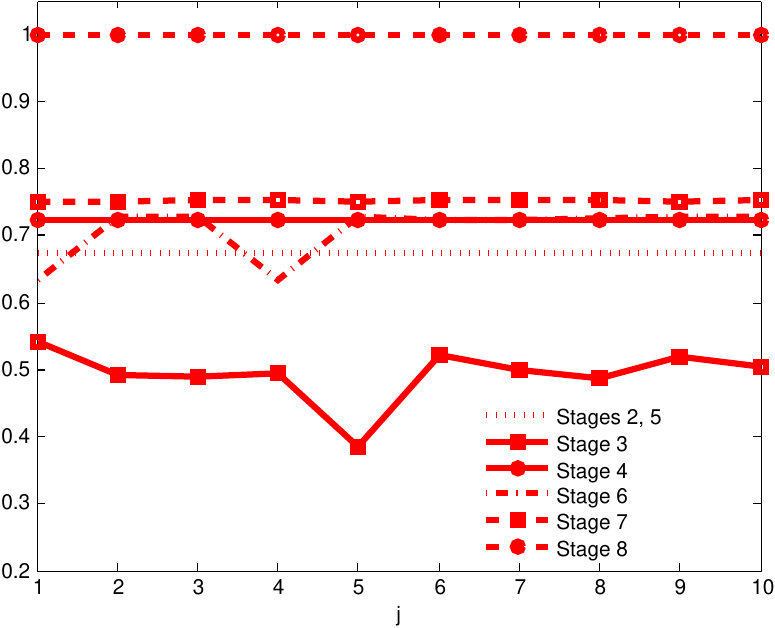}
&
\includegraphics[scale=0.6]{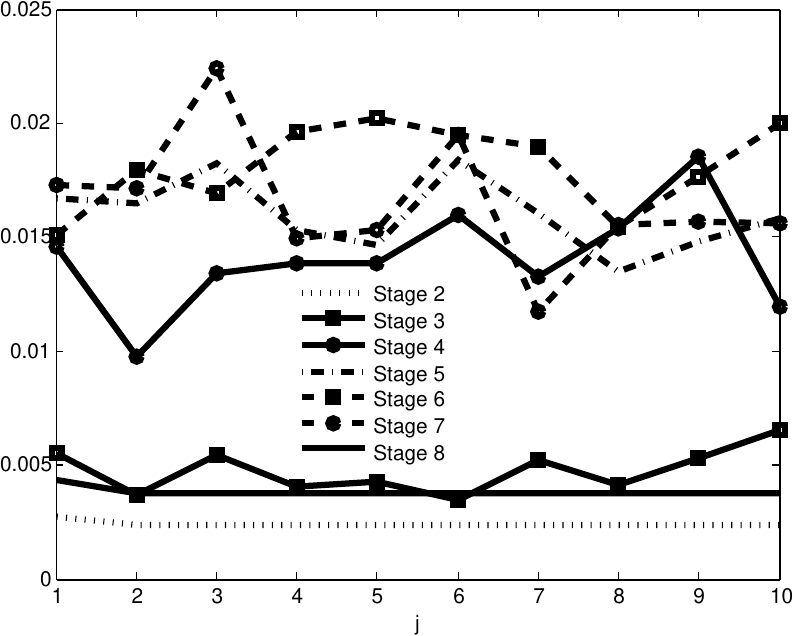}\\
{$T=8, n=5, M=10$, {\tt{CuSMuDA CS 1}}} & {$T=8, n=5, M=10$, {\tt{CuSMuDA CS 2}}}\\
\includegraphics[scale=0.6]{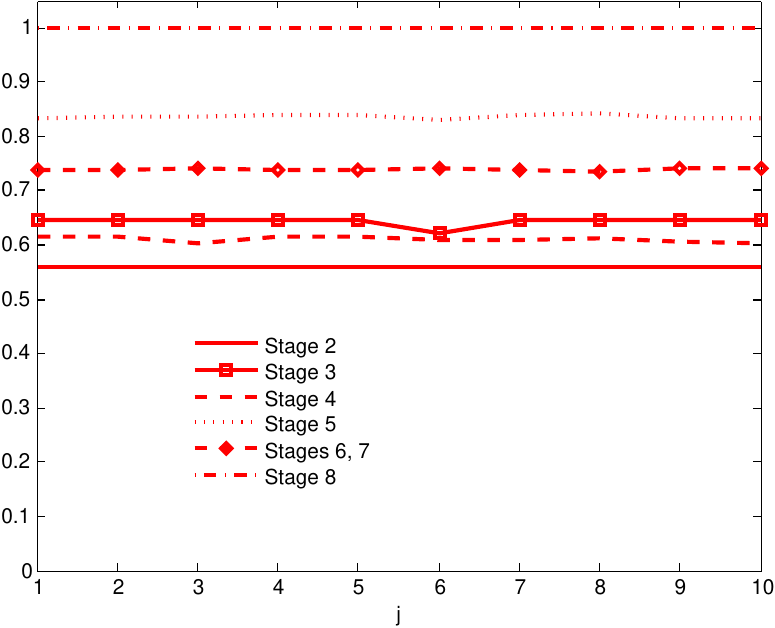}
&
\includegraphics[scale=0.6]{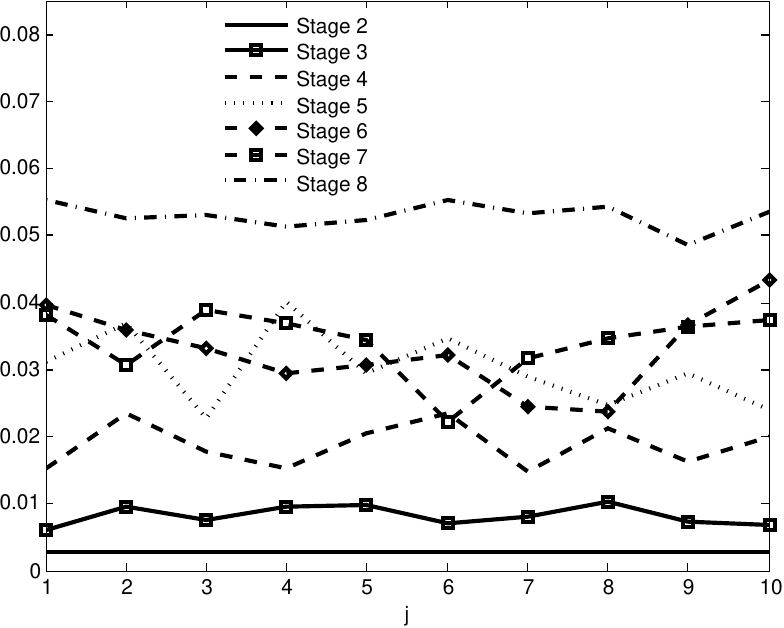}\\
{$T=8, n=6, M=10$, {\tt{CuSMuDA CS 1}}} & {$T=8, n=6, M=10$, {\tt{CuSMuDA CS 2}}}
\end{tabular}
\caption{ \label{fig:f_4} Representation for stages $t=2,\ldots,T$, of the 
mean proportion of cuts (over the iterations of the algorithm) selected for 
$\mathfrak{Q}_t(\cdot, \xi_{t j})$ as a function of $j=1,\ldots,M$.
Left plots: {\tt{CuSMuDA CS 1}}, right plots: {\tt{CuSMuDA CS 2}}.}
\end{figure}

One last comment is now in order. We have already observed that on all
experiments, the Multicut Level 1 and Territory Algorithm cut selection strategies, i.e., {\tt{CuSMuDA CS 1}},
correctly select all cuts at the final stage. However, a crude implementation of the
Multicut Level 1 pseudo-code given in Figure \ref{figurecut1} resulted in 
the elimination of cuts at the final stage. This comes from the fact that approximate solutions of the
optimization subproblems are computed. Therefore two optimization problems could  compute the same cuts but the solver
may return two different (very close) approximate solutions. Similarly, a cut may be in theory the highest at some point
(for instance cut $\mathcal{C}_{T j}^k$ is, in theory, the highest at $x_{T-1}^k$) but numerically the value of another
cut at this point may be deemed slightly higher, because of numerical errors.
The remedy is to introduce a small error term $\varepsilon_0$ ($\varepsilon_0=10^{-6}$ in our experiments)
such that the values $V_1$ and $V_2$ of two cuts $\mathcal{C}_1$ and $\mathcal{C}_2$  at a trial point are considered equal if
$|V_2 -V_1| \leq \varepsilon_0 \max(1, |V_1|)$ while $\mathcal{C}_1$ is above $\mathcal{C}_2$ at this trial point if
$V_1 \geq  V_2 + \varepsilon_0 \max(1, |V_1|)$.
Therefore the pseudo-codes of Multicut Level 1 and MLM Level 1 
given in Figure \ref{figurecut1} need to be modified. The corresponding pseudo-codes taking into account the approximation
errors are given in Figure \ref{figurecut2} in the Appendix.

\subsection{Inventory management}

\subsubsection{Model} \label{sec:inventory}

We consider the $T$-stage inventory management problem given in (Shapiro et al. 2009).
For each stage $t=1,\ldots, T$, on the basis of the inventory
level $y_t$ at the beginning of period $t$, we have to decide on the quantity $x_t - y_t$ of a product
to buy so that the inventory level becomes $x_t$. 
Given demand $\xi_t$ for that product 
for stage $t$, the  inventory level is $y_{t+1}=x_t-\xi_t$ at the beginning of stage $t+1$.
The inventory level can become negative, in which case a backorder cost is paid.
If one is interested in minimizing the average cost over the optimization period,
we need to solve the following dynamic programming equations: for $t=1,\ldots,T$, defining
$\mathcal{Q}_t( y_t ) = \mathbb{E}_{\xi_t}[\mathfrak{Q}_t( y_t, \xi_t)]$, the stage $t$ problem is
$$
\mathfrak{Q}_t(y_t, \xi_t)=
\left\{ 
\begin{array}{l}
\inf c_t( x_t - y_t ) + b_t (\xi_t - x_t)_{+} + h_t (x_t - \xi_t)_{+} + \mathcal{Q}_{t+1}(y_t)\\
y_{t+1} = x_t -\xi_t, x_t \geq y_t,
\end{array}
\right.
$$
where $c_t$ is the unit buying cost, $h_t$ is the holding cost, and $b_t$ the backorder cost.
The optimal mean cost is $\mathcal{Q}_1( y_1 )$ where $y_1$ is the initial stock.
In what follows we present the results of numerical simulations obtained solving this problem with
MuDA with sampling (denoted again by {\tt{MuDA}}), CuSMuDA with Mulicut Level 1 cut selection (denoted again by {\tt{CuSMuDA CS 1}}), and CuSMuDA with 
MLM Level 1 cut selection  (denoted again by {\tt{CuSMuDA CS 2}}).

\subsubsection{Numerical results}

We consider six values for the number of stages $T$ ($T \in \{5, 10, 15, 20, 25, 30\}$) and for fixed $T$, the following values of the
problem and algorithm parameters are taken:
\begin{itemize} 
\item $c_t=1.5+\cos(\frac{\pi t}{6}), b_t=2.8, h_t=0.2, M_t=M$ for all $t=1,\ldots,T$,
\item $y_1=10$, $p_{t i}=\frac{1}{M}=\frac{1}{20}$ for all $t,i$,
\item $\xi_1= {\overline{\xi_1}}$ and $(\xi_{t 1},\xi_{t 2},\ldots,\xi_{t M})$ 
corresponds to a sample from the distribution of ${\overline{\xi_t}}(1+ 0.1\varepsilon_t)$ for i.i.d $\varepsilon_t \sim \mathcal{N}(0, 1)$
for $t=2,\ldots,T$, where ${\overline{\xi_t}}=5+0.5t$,
\item for the stopping criterion, in \eqref{formulazsup} $\alpha=0.025$
and $N=200$, and $\varepsilon=0.05$ in \eqref{stoppingcriterion}.
\end{itemize}

\par {\textbf{Checking the implementations.}}
As for the portfolio problem, for each value of $T \in \{10, 15, 20, 25, 30\}$, we plot in Figure \ref{fig:f_5}
the evolution of the upper and lower bounds along the iterations of the three algorithms.
These bounds are very close for the three algorithms at the final iteration.
For these runs, the bounds are in fact very close for all iterations, not only the last one.
For $T=5$ only one iteration was necessary for all three algorithms, yielding the bounds 
$[25.25, 25.61], [25.25, 25.56]$, and $[25.25, 25.62]$, for respectively 
{\tt{MuDA}}, {\tt{CuSMuDA CS 1}}, and {\tt{CuSMuDA CS 2}} at the end of this iteration. \\

\begin{figure}
\centering
\begin{tabular}{cc}
\includegraphics[scale=0.6]{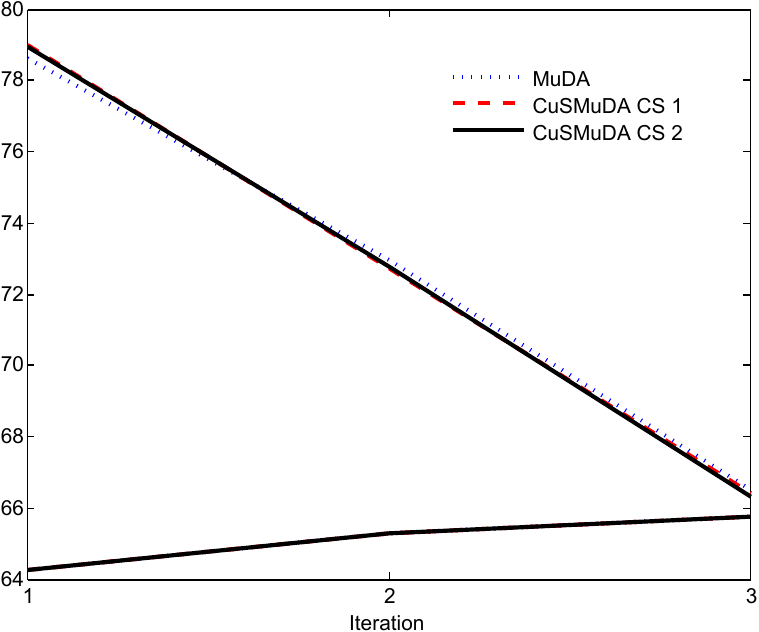}
&
\includegraphics[scale=0.6]{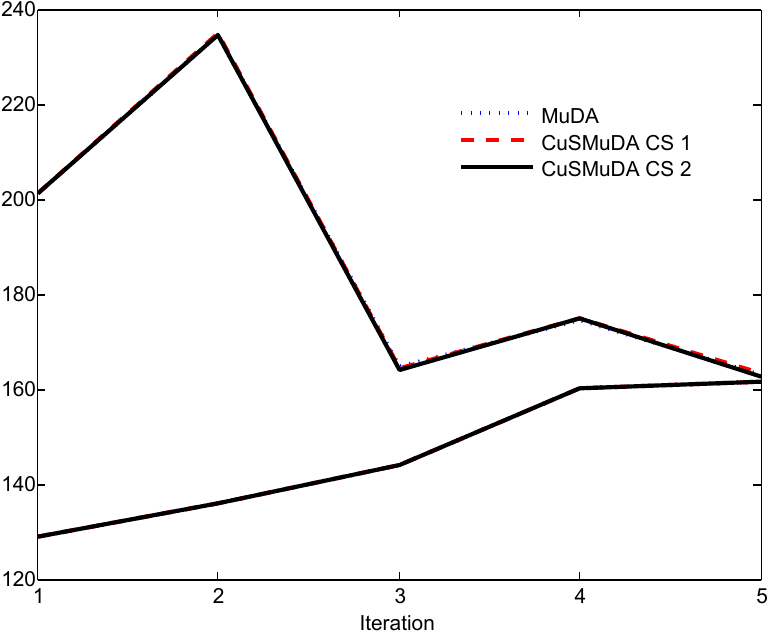}
\\
{$T=10, M=20$} & {$T=15, M=20$}\\
\includegraphics[scale=0.6]{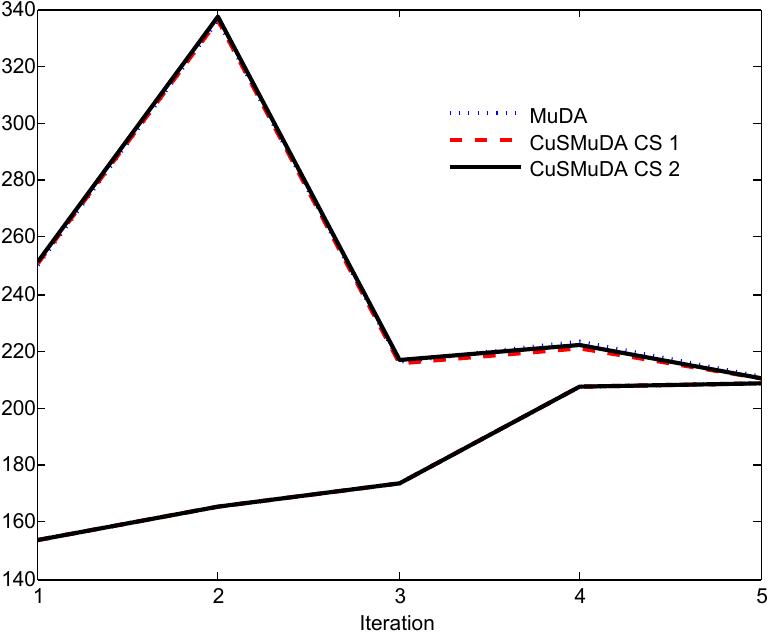}
&
\includegraphics[scale=0.6]{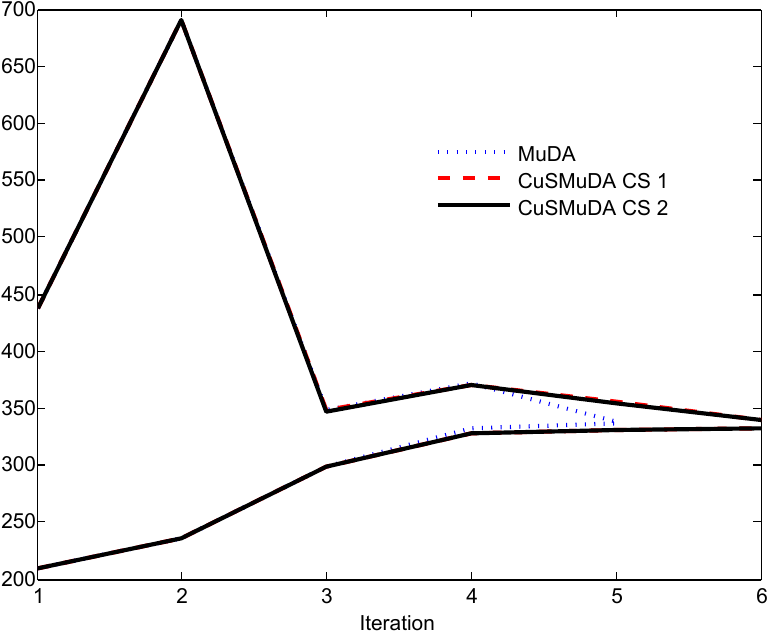}
\\
{$T=20, M=20$} &  {$T=25, M=20$}\\
\end{tabular}
\begin{tabular}{c}
\includegraphics[scale=0.6]{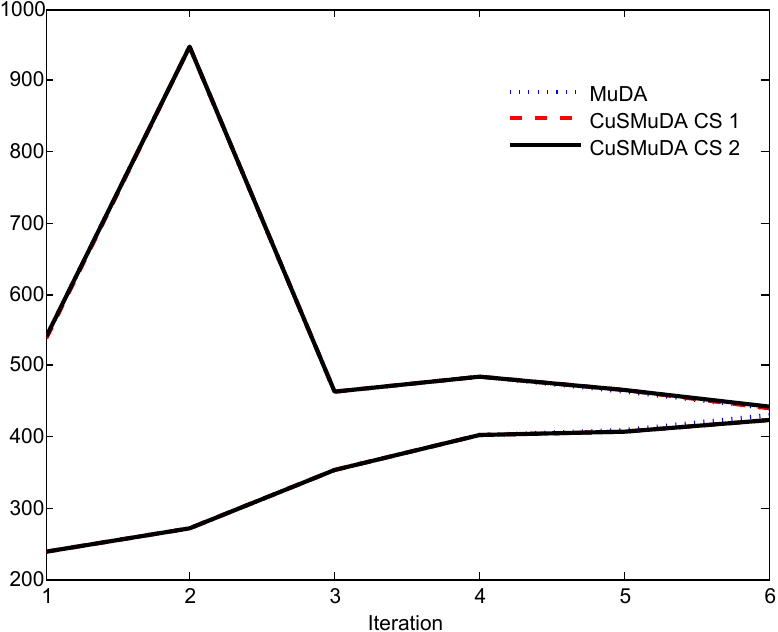}
\\
{$T=30, M=20$} \\
\end{tabular}
\caption{ \label{fig:f_5} Evolution of the upper bounds $z_{\sup}^i$ and lower bounds $z_{\inf}^i$
along the iterations of the algorithms.}
\end{figure}

\par {\textbf{Computational time.}} 
Table \ref{tablerunningtime2} shows the CPU time needed to solve our six instances
of inventory management problems. As for the portfolio problem and for the same reasons, 
{\tt{CuSMuDA CS 2}} is much quicker than both {\tt{MuDA}} (between 6.4 and 15.7 times quicker)
and {\tt{CuSMuDA CS 1}} (between 10.2 and 25.6  times quicker).\\

\begin{table}
\centering
\begin{tabular}{|c||c|c|c|}
\hline
& {\tt{MuDA}} & {\tt{CuSMuDA CS 1}}  & {\tt{CuSMuDA CS 2}} \\
\hline
$T=5, M=20$ &1.57  &  2.90  &  0.22   \\
\hline
$T=10, M=20$ & 26.33 &   43.07   & 1.68  \\
\hline
$T=15, M=20$ & 106.52  &  178.08    & 8.07  \\
\hline
$T=20, M=20$ &  158.10  &  245.24    & 19.43 \\
\hline
$T=25, M=20$ &189.27   &  405.58   & 22.64 \\
\hline
$T=30, M=20$ & 363.86   & 575.53    & 56.62 \\
\hline
\end{tabular}
\caption{Computational time (in minutes) for solving instances of the inventory  problem of Section \ref{sec:inventory} with {\tt{MuDA}}, {\tt{CuSMuDA CS 1}}, and {\tt{CuSMuDA CS 2}}.}
\label{tablerunningtime2}
\end{table}

\par {\textbf{Proportion of cuts selected.}} 
The proportions of cuts selected exhibit similar patterns to those observed
on the portfolio problems. In Figure \ref{fig:f_6} which reports for all instances 
the mean proportion of cuts selected for all stages, we observe again that for all stages on average
at least 40\% of cuts are selected for {\tt{CuSMuDA CS 1}} and at most 20\% of cuts are selected for 
{\tt{CuSMuDA CS 2}}. Figures \ref{fig:f_7} and \ref{fig:f_8} show
the evolution of the mean proportions of cuts (over the iterations of the algorithm) selected for 
$\mathfrak{Q}_t(\cdot, \xi_{t j})$ as a function of $j=1,\ldots,M$. 
The conclusions are similar to those of Section \ref{portfolio}:
\begin{enumerate}
\item for most stages, the proportion of cuts selected with {\tt{CuSMuDA CS 1}} is very small, below 2\% (see the
right plots of Figures \ref{fig:f_7} and \ref{fig:f_8}).
\item With {\tt{CuSMuDA CS 1}} for the last stage but also for other stages on some instances (stages $10-14$ for $T=15$, stages $10-12$ for $T=20$,
stages $25-29$ for $T=30$) all cuts
are selected at all iterations and at several other stages a large proportion (above 80\%) of cuts is selected.
\item For each stage $t$, the proportions of cuts selected 
for functions $\mathfrak{Q}_t(\cdot, \xi_{t j}), j=1,\ldots,M,$ are quite similar.
\end{enumerate}
For $T=5$, where one iteration is enough to satisfy the stopping criterion for all algorithms,
{\tt{CuSMuDA CS 1}} selects all cuts for all stages while {\tt{CuSMuDA CS 2}}
selects for all stages in the backward pass only  0.5\%
of the cuts, i.e., (0.5/100)$\small{\times}200\small{\times}20=20$ cuts per stage. The observations above explain why {\tt{CuSMuDA CS 2}} is much quicker.

\begin{figure}
\centering
\begin{tabular}{cc}
\includegraphics[scale=0.6]{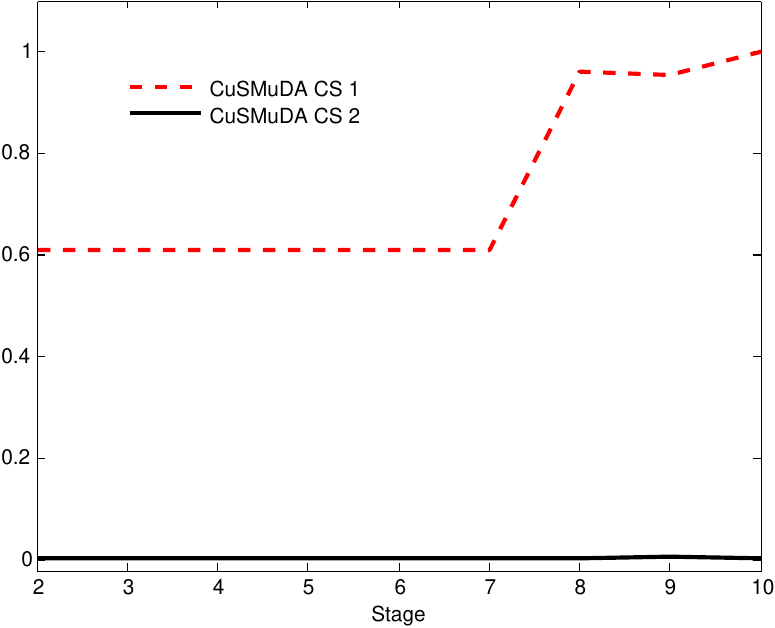}
&
\includegraphics[scale=0.6]{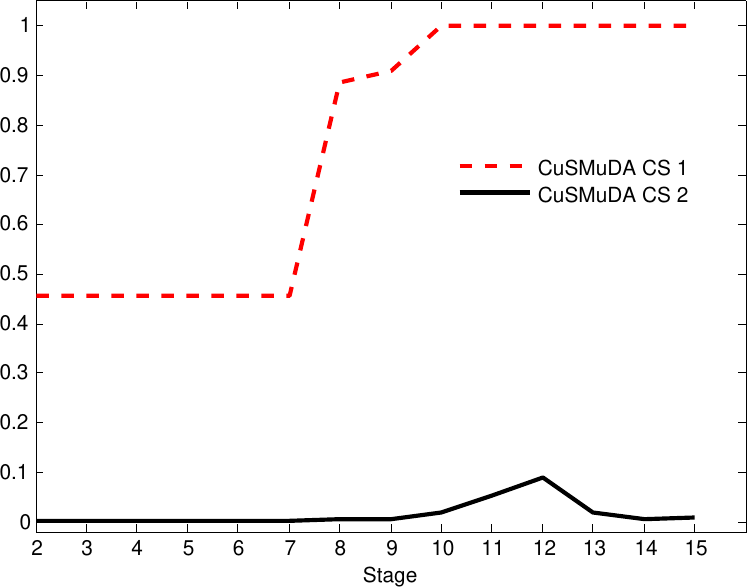}
\\
{$T=10, M=20$} & {$T=15, M=20$} \\
\includegraphics[scale=0.6]{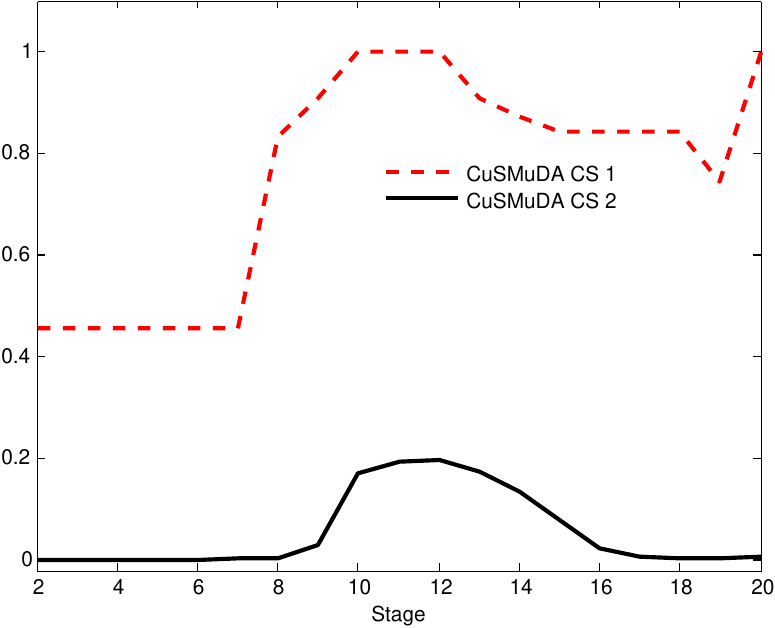}
&
\includegraphics[scale=0.6]{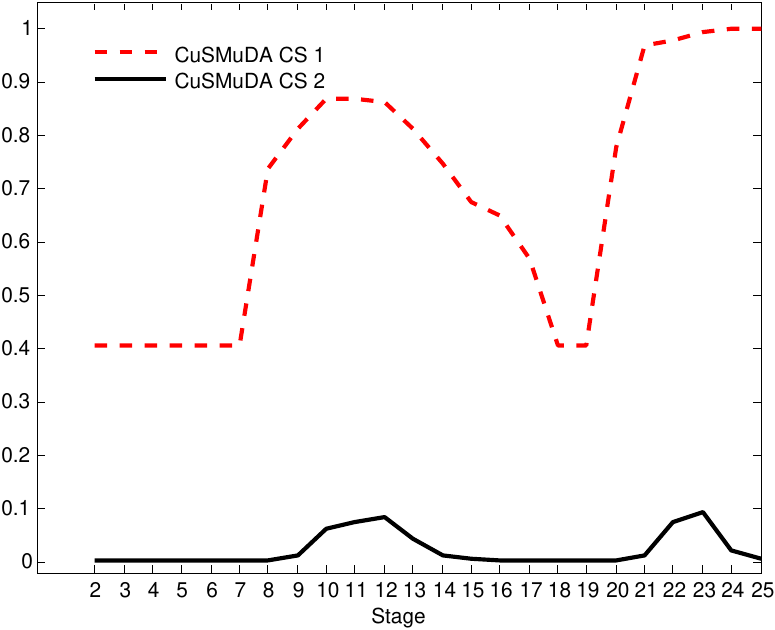}
\\
{$T=20, M=20$} & {$T=25, M=20$} 
\end{tabular}
\begin{tabular}{c}
\includegraphics[scale=0.6]{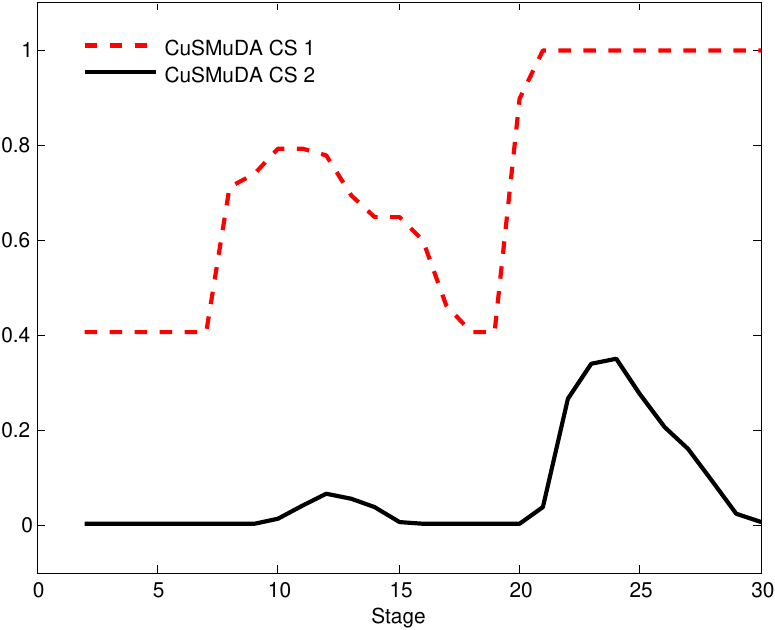}
\\
{$T=30, M=20$} 
\end{tabular}
\caption{ \label{fig:f_6} Mean proportion of cuts (over the iterations of the algorithm) selected for stages $t=2,\ldots,T$ for 
{\tt{CuSMuDA CS 1}} and {\tt{CuSMuDA CS 2}}.}
\end{figure}

\begin{figure}
\centering
\begin{tabular}{cc}
\includegraphics[scale=0.6]{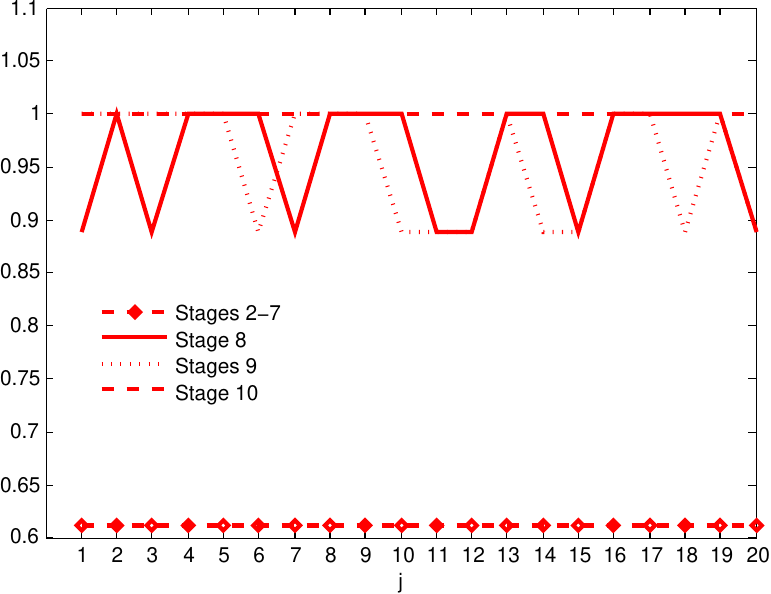}
&
\includegraphics[scale=0.57]{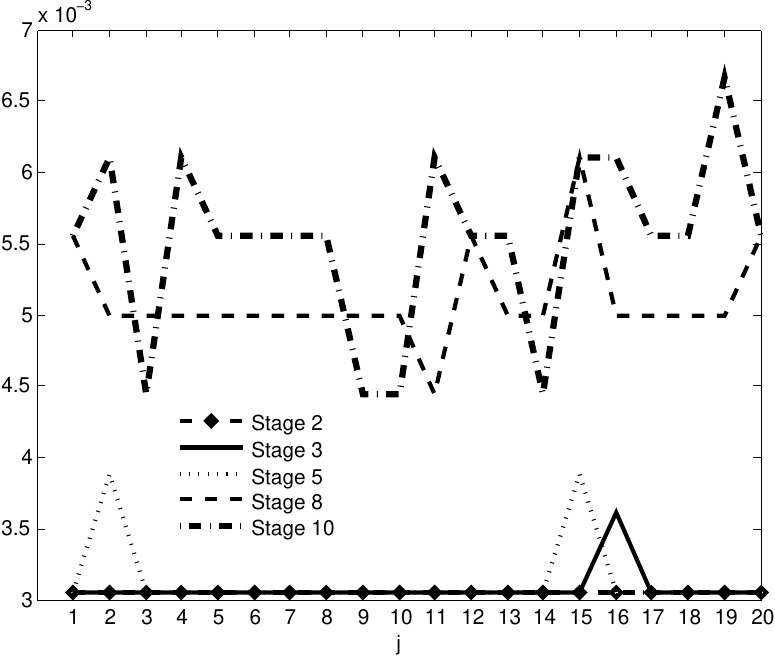}
\\
{$T=10, M=20$, {\tt{CuSMuDA CS 1}}} & {$T=10, M=20$, {\tt{CuSMuDA CS 2}}}
\\
\includegraphics[scale=0.6]{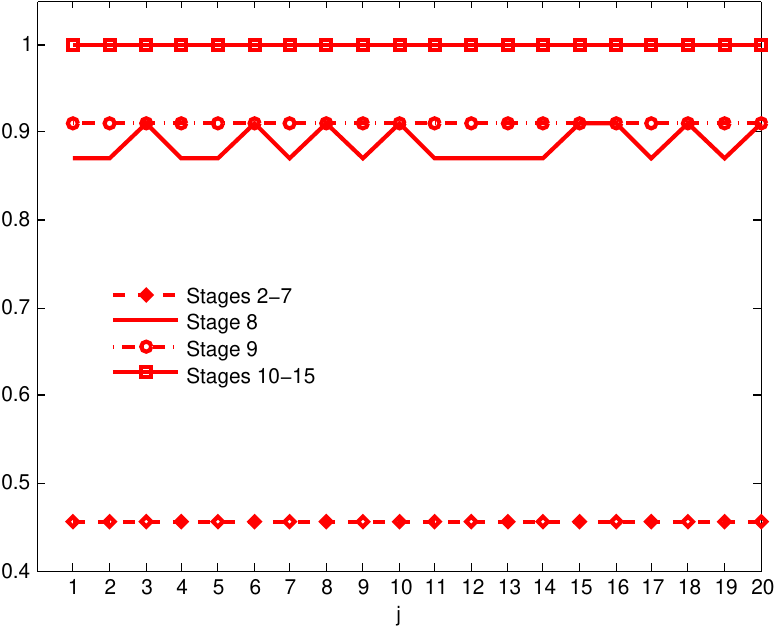}
&
\includegraphics[scale=0.6]{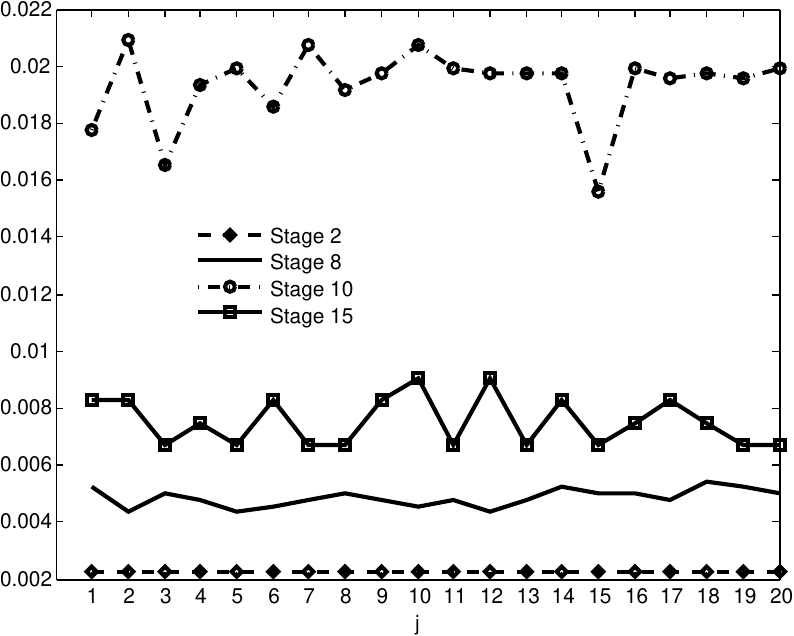}\\
{$T=15, M=20$, {\tt{CuSMuDA CS 1}}} & {$T=15, M=20$, {\tt{CuSMuDA CS 2}}}\\
\includegraphics[scale=0.6]{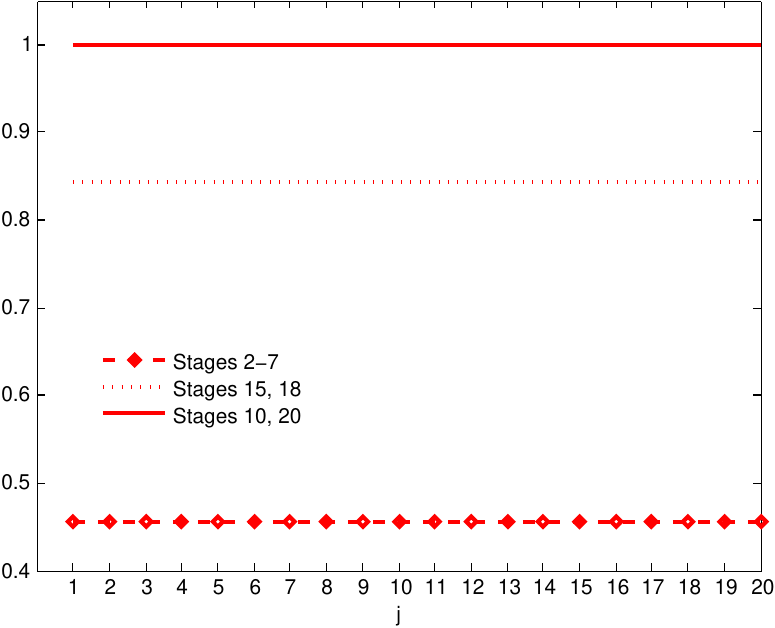}
&
\includegraphics[scale=0.6]{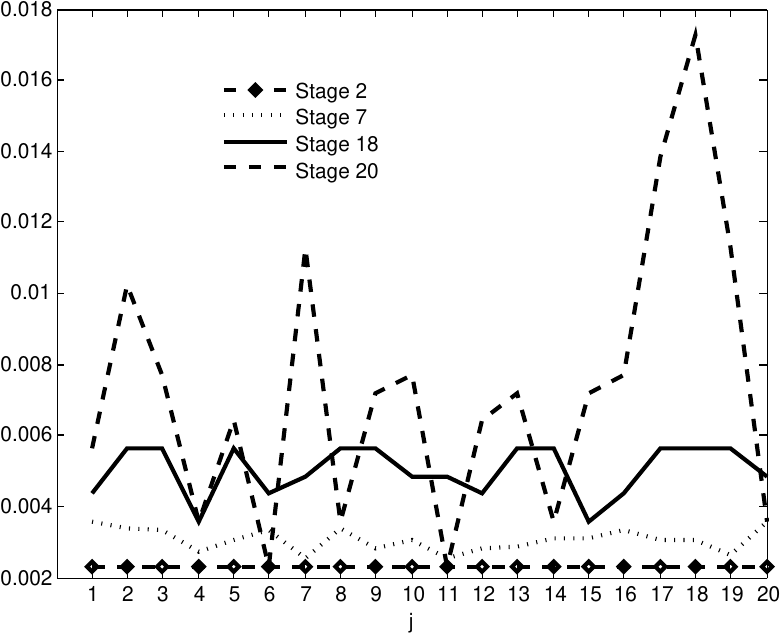}
\\
{$T=20, M=20$, {\tt{CuSMuDA CS 1}}} & {$T=20, M=20$, {\tt{CuSMuDA CS 2}}}
\end{tabular}
\caption{ \label{fig:f_7}  {\small{Representation for some stages $t \in \{2,\ldots,T\}$, of the 
mean proportion of cuts (over the iterations of the algorithm) selected for 
$\mathfrak{Q}_t(\cdot, \xi_{t j})$ as a function of $j=1,\ldots,M$.
Left plots: {\tt{CuSMuDA CS 1}}, right plots: {\tt{CuSMuDA CS 2}}}}.}
\end{figure}

\begin{figure}
\centering
\begin{tabular}{cc}
\includegraphics[scale=0.6]{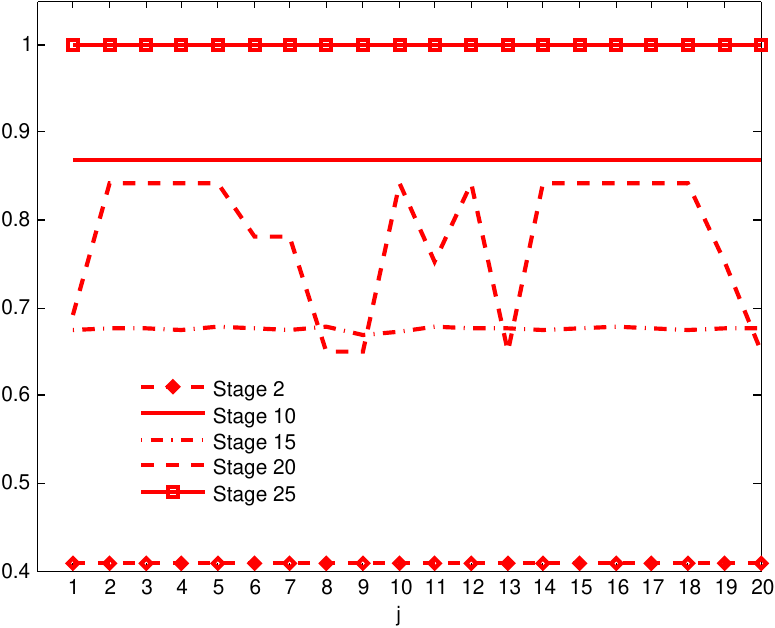}
&
\includegraphics[scale=0.6]{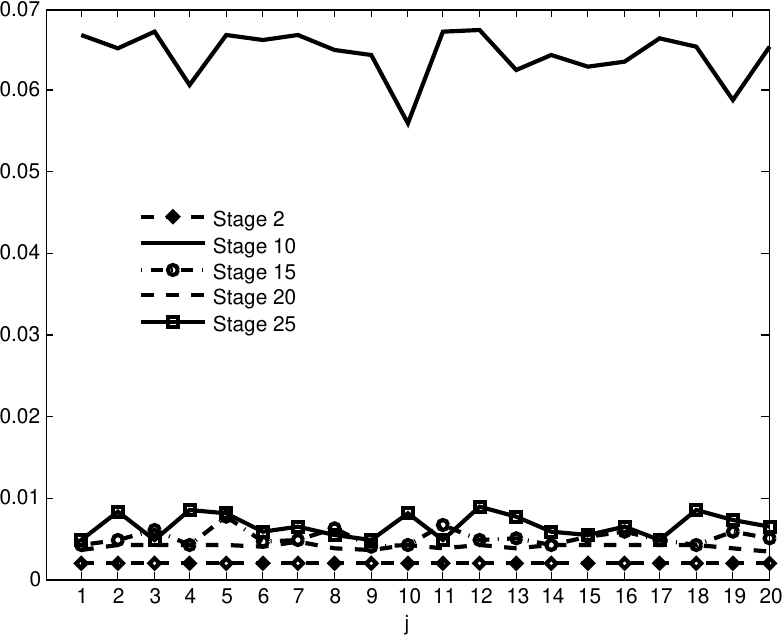}
\\
{$T=25, M=20$, {\tt{CuSMuDA CS 1}}} & {$T=25, M=20$, {\tt{CuSMuDA CS 2}}}
\\
\includegraphics[scale=0.6]{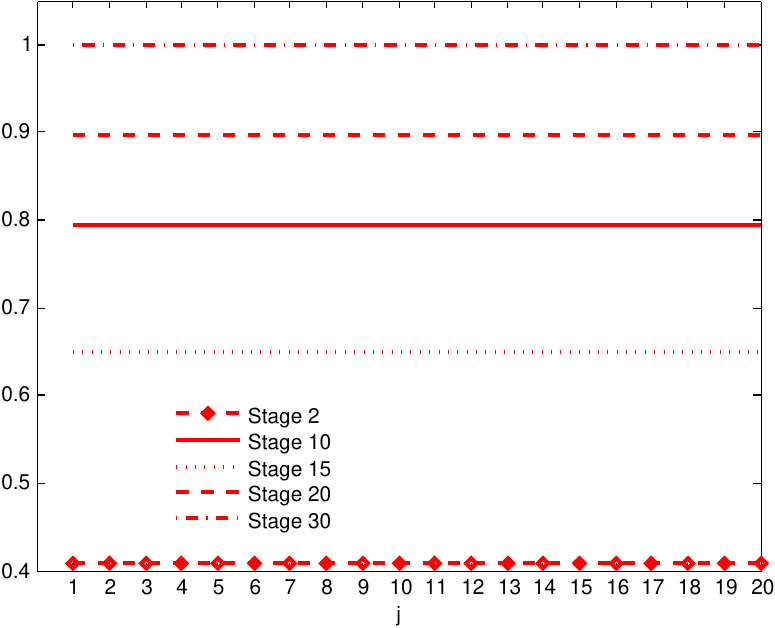}
&
\includegraphics[scale=0.6]{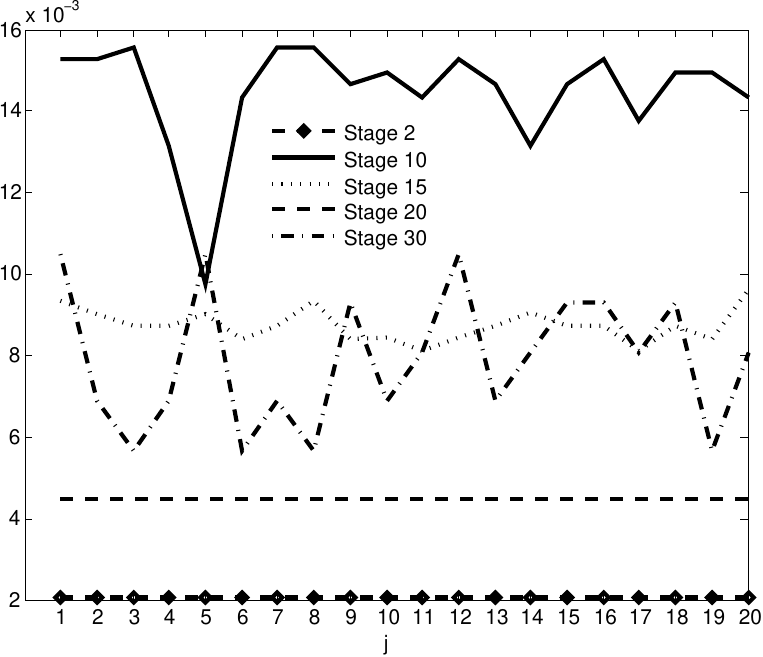}\\
{$T=30, M=20$, {\tt{CuSMuDA CS 1}}} & {$T=30, M=20$, {\tt{CuSMuDA CS 2}}}
\end{tabular}
\caption{ \label{fig:f_8}  {\small{Representation for some stages $t \in \{2,\ldots,T\}$, of the 
mean proportion of cuts (over the iterations of the algorithm) selected for 
$\mathfrak{Q}_t(\cdot, \xi_{t j})$ as a function of $j=1,\ldots,M$.
Left plots: {\tt{CuSMuDA CS 1}}, right plots: {\tt{CuSMuDA CS 2}}.}}}
\end{figure}

\section{Conclusion}

We proposed CuSMuDA, a  combination of a limited memory variant of the Level 1 cut selection strategy (MLM Level 1)
with Multicut Decomposition algorithms to solve multistage stochastic linear programs.
The convergence of the method in a finite number of iterations was proved for a class of cut selection
strategies that contains MLM Level 1. 
Compared to traditional Multicut Decomposition methods, CuSMuDA significantly reduced  
the computational time on all six instances of a portfolio problem
and six instances of an inventory problem. 

It would be interesting to test CuSMuDA and the limited memory variant of Level 1 (both for MuDA and SDDP) on other types of stochastic programs and to extend the analysis
to nonlinear stochastic programs.

\section*{Acknowledgments} The second author's research was 
partially supported by an FGV grant, CNPq grants 307287/2013-0 and 401371/2014-0, and FAPERJ grant E-26/201.599/2014.
The authors wish to thank Vincent Lecl\`ere for helpful discussions.

\nocite{*}

\addcontentsline{toc}{section}{References}
\bibliographystyle{plain}
\bibliography{Bibliography}

\section*{Appendix}

\begin{figure}
\begin{tabular}{|c|c|}
 \hline 
Multicut Level 1 & MLM Level 1 \\
\hline
\begin{tabular}{l}
$I_{t j}^{k}=\{k\}$, $m_{t  j}^k =\mathcal{C}_{t j}^k ( x_{t-1}^k )$.\\
{\textbf{For}} $\ell=1,\ldots,k-1$,\\
\hspace*{0.3cm}{\textbf{If }}$\mathcal{C}_{t j}^k( x_{t-1}^{\ell} ) > m_{t j}^{\ell} + \varepsilon_0 \max(1, |m_{t j}^{\ell}| )$ \\
\hspace*{0.6cm}$I_{t j}^{\ell}=\{k\},\; m_{t j}^{\ell}=\mathcal{C}_{t j}^k( x_{t-1}^{\ell} )$\\
\hspace*{0.3cm}{\textbf{Else if}} $|\mathcal{C}_{t j}^k( x_{t-1}^{\ell} ) - m_{t j}^{\ell}| \leq \varepsilon_0 \max(1, |m_{t j}^{\ell}| )$ \\
\hspace*{0.6cm}$I_{t j}^{\ell}=I_{t j}^{\ell} \cup \{k\}$\\
\hspace*{0.3cm}{\textbf{End If}}\\
\hspace*{0.3cm}{\textbf{If }}$\mathcal{C}_{t j}^{\ell}( x_{t-1}^{k} ) > m_{t j}^k + \varepsilon_0 \max(1, |m_{t j}^{k}| )$ \\
\hspace*{0.6cm}$I_{t j}^k =\{\ell\},\; m_{t j}^k = \mathcal{C}_{t j}^{\ell}( x_{t-1}^{k} )$\\
\hspace*{0.3cm}{\textbf{Else if}} $|\mathcal{C}_{t j}^{\ell}( x_{t-1}^{k} ) - m_{t j}^{k}| \leq \varepsilon_0 \max(1, |m_{t j}^{k}| )$ \\
\hspace*{0.6cm}$I_{t j}^{k}=I_{t j}^{k} \cup \{\ell\}$\\
\hspace*{0.3cm}{\textbf{End If}}\\
{\textbf{End For}}\\
\end{tabular}
&
\begin{tabular}{l}
\vspace*{-0.42cm}\\
$I_{t j}^{k}=\{1\}$, $m_{t  j}^k =\mathcal{C}_{t j}^1 ( x_{t-1}^k )$.\\
{\textbf{For}} $\ell=1,\ldots,k-1$,\\
\hspace*{0.3cm}{\textbf{If }}$\mathcal{C}_{t j}^k( x_{t-1}^{\ell} ) > m_{t j}^{\ell} + \varepsilon_0 \max(1, |m_{t j}^{\ell}| )$ \\
\hspace*{0.6cm}$I_{t j}^{\ell}=\{k\},\; m_{t j}^{\ell}=\mathcal{C}_{t j}^k( x_{t-1}^{\ell} )$\\
\hspace*{0.3cm}{\textbf{End If}}\\
\vspace*{0.4cm}\\
\hspace*{0.3cm}{\textbf{If }}$\mathcal{C}_{t j}^{\ell +1}( x_{t-1}^{k} ) > m_{t j}^k + \varepsilon_0 \max(1, |m_{t j}^{k}| )$ \\
\hspace*{0.6cm}$I_{t j}^k =\{\ell +1\},\; m_{t j}^k = \mathcal{C}_{t j}^{\ell + 1}( x_{t-1}^{k} )$\\
\hspace*{0.3cm}{\textbf{End If}}\\
\vspace*{0.4cm}\\
{\textbf{End For}}\\
\end{tabular}
\\
\hline
\end{tabular}
\caption{Pseudo-codes for selecting the cuts using Multicut Level 1 and MLM Level 1 taking into account approximation errors.}
\label{figurecut2}
\end{figure}

\end{document}